
\documentclass[twoside,11pt,leqno]{article}        
\usepackage{amssymb, amsmath, amscd, amsthm, color, epsfig, url, graphicx, xr, makeidx}  
\usepackage{amsthm}
\usepackage{psfrag}
\usepackage{enumerate}
\usepackage{color}
\usepackage[all]{xy}

\setlength{\evensidemargin}{0.5in}
\setlength{\oddsidemargin}{0.5in}
\setlength{\marginparwidth}{0.5in}
\setlength{\textwidth}{4.7in}
\setlength{\textheight}{7.5in}
\setlength{\topmargin}{0.0in}
\setlength{\headheight}{0.5in}
\setlength{\headsep}{0.45in}
\flushbottom

\newtheorem{thm}{Theorem}[section]

\newtheorem{cor}[thm]{Corollary}

\newtheorem{conj}[thm]{Conjecture}

\newtheorem{thm1}{Theorem}[subsection]

\newtheorem{cor1}[thm1]{Corollary}
\newtheorem{prop1}[thm1]{Proposition}

\theoremstyle{definition}                  
\newtheorem{rem1}[thm1]{Remark} 
     
\newtheorem{exam1}[thm1]{Example}
\newtheorem{defi1}[thm1]{Definition}
\newtheorem{defi}[thm]{Definition}
\newtheorem{rem}[thm]{Remark} 


\newcommand{\refS}[1]{Section~\ref{S:#1}}
\newcommand{\refT}[1]{Theorem~\ref{T:#1}}

\newcommand{\refP}[1]{Proposition~\ref{P:#1}}
\newcommand{\refD}[1]{Definition~\ref{D:#1}}

\newcommand{\refEx}[1]{Example~\ref{Ex:#1}}


\newcommand{\R}{{\mathbb R}}

\newcommand{\Q}{{\mathbb Q}}

\newcommand{\hofiber}{\operatorname{hofiber}}

\newcommand{\holim}{\operatorname{holim}}

\newcommand{\Map}{\operatorname{Map}}
\newcommand{\Emb}{\operatorname{Emb}}

\newcommand{\Imm}{\operatorname{Imm}}

\newcommand{\Hom}{\operatorname{Hom}}

\newcommand{\Conf}{\operatorname{Conf}}
\newcommand{\Ho}{\operatorname{H}}

\newcommand{\calV}{{\mathcal{V}}}

\newcommand{\calC}{{\mathcal{C}}}
\newcommand{\calD}{{\mathcal{D}}}

\newcommand{\K}{{\mathcal{K}}}

\newcommand{\Lk}{{\mathcal{L}}}
\newcommand{\HLk}{{\mathcal{H}}}
\newcommand{\Br}{{\mathcal{B}}}

\newcommand{\LD}{{\mathcal{LD}}}
\newcommand{\HLD}{{\mathcal{HD}}}

\newcommand{\LW}{{\mathcal{LW}}}
\newcommand{\HLW}{{\mathcal{HW}}}

\newcommand{\Top}{\operatorname{Top}}








\newcommand{\rnf}{\renewcommand{\thefootnote}{\arabic{footnote}}}
\newcommand{\thankyou}[2]{\stepcounter{footnote}\footnotetext[#1]{#2}}

\title{\vspace{-3cm}
\ \hspace{-2.70in}
{\small {\bf Morfismos},
Vol. XX, No. XX, 201X,                                                                                                
pp.
        X--X   
}
\\
\vspace{3cm} 
Configuration space integrals and the topology of knot and link spaces 
}


\author{\rnf   Ismar Voli\'c
               \footnotemark[1]
}


\date{}



\pagestyle{myheadings}

\markboth{\small Ismar Voli\'c \hfil}{\small\hfil Configuration space integrals and knots}

            %
            %

\begin{document}
\maketitle


\thankyou{1}{The author was supported by the National Science Foundation grant DMS 1205786.}




\begin{abstract} \noindent
This article surveys the use of configuration space integrals in the study of the topology of knot and link spaces.  The main focus is the exposition of how these integrals produce finite type invariants of classical knots and links.  More generally,  we also explain the construction of a chain map, given by configuration space integrals, between a certain diagram complex and the deRham complex of the space of knots in dimension four or more.  A generalization to spaces of links, homotopy links, and braids is also treated, as are connections to Milnor invariants, manifold calculus of functors, and the rational formality of the little balls operads.
\end{abstract}

\noindent \thanks{\it{2010 Mathematics Subject Classification:}
57Q45, 57M27, 81Q30, 57\-R40.
\\
\it{Keywords and phrases:}
configuration space integrals, Bott-Taubes intergals, knots, links, homotopy links, braids, finite type invariants, Vassiliev invariants, Milnor invariants, chord diagrams, weight systems, manifold calculus, embedding calculus, little balls operad, rational formality of configuration spaces.
}


\vspace{.7cm}\centerline{\sc Contents}
1 ~\hspace{.6mm}Introduction\hfill\pageref{S:Intro}\ \ \ \ 

~~~~1.1 ~\hspace{.6mm}Organization of the paper\hfill\pageref{S:Organization}\ \ \ \ 

2 ~\hspace{.6mm}Preliminaries\hfill\pageref{S:Preliminaries}\ \ \ \ 

~~~~2.1 ~\hspace{.6mm}Differential forms and integration along the fiber\hfill\pageref{S:Forms}\ \ \ \ 

~~~~2.2 ~\hspace{.6mm}Space of long knots\hfill\pageref{S:KnotsLinks}\ \ \ \ 

~~~~2.3~\hspace{.6mm}Finite type invariants\hfill\pageref{S:FiniteType}\ \ \ \ 

~~~~2.4~\hspace{.6mm}Configuration spaces and their compactification\hfill\pageref{S:Compactification}\ \ \ \ 

3 ~\hspace{.6mm}Configuration space integrals and finite type knot \hfill \ \ \ \ \ 

~~~~invariants\hfill\pageref{S:ConfFTInvs}\ \ \ \ 

~~~~3.1 ~\hspace{.6mm}Motivation: The linking number\hfill\pageref{S:LinkingNumber}\ \ \ \ 

~~~~3.2 ~\hspace{.6mm}â``Self-linking" for knots\hfill\pageref{S:Self-Linking}\ \ \ \ 

~~~~3.3 ~\hspace{.6mm}Finite type two knot invariant\hfill\pageref{S:Casson}\ \ \ \ 

~~~~3.4~\hspace{.6mm}Finite type $k$ knot invariants\hfill\pageref{S:GeneralFiniteType}\ \ \ \ 

4 ~\hspace{.6mm}Generalization to $\K^n$, $n>3$\hfill\pageref{S:HigherKnots}\ \ \ \ 

5 ~\hspace{.6mm}Further generalizations and applications\hfill\pageref{S:Generalizations}\ \ \ \ 

~~~~5.1~\hspace{.6mm}Spaces of links\hfill\pageref{S:Links}\ \ \ \ 

~~~~5.2~\hspace{.6mm}Manifold calculus of functors and finite type \hfill \ \ \ \ \ 

 ~~~~invariants\hfill\pageref{S:Calculus}\ \ \ \ 

~~~~5.3~\hspace{.6mm}Formality of the little balls operad\hfill\pageref{S:Formality}\ \ \ \ 

\ \hspace{-2.5mm}References\hfill\pageref{references}\ \ \ \ 

\vspace{.22cm}


\section{Introduction}\label{S:Intro}


Configuration space integrals are fascinating objects that lie at the intersection of physics, combinatorics, topology, and geometry.  Since their inception over twenty years ago, they have emerged as an important tool in the study of the topology of spaces of embeddings and in particular of spaces of knots and links.
\bigskip

The beginnings of configuration space integrals can be traced back to Guadagnini, Martellini, and Mintchev \cite{GMM} and Bar-Natan \cite{BN:Thesis} whose work was inspired by Chern-Simons theory.  The more topological point of view was introduced by Bott and Taubes \cite{BT}; configuration space integrals are because of this sometimes even called \emph{Bott-Taubes integrals} in the literature (more on Bott and Taubes' work can be found in \refS{Casson}).  The point of this early work was to use configuration space integrals to construct a knot invariant in the spirit of the classical linking number of a two-component link.  
This invariant turned out to be of \emph{finite type} (finite type invariants are reviewed in \refS{FiniteType}) and D. Thurston \cite{Thurs} generalized it to construct all finite type invariants. We will explain D. Thurston's result in \refS{GeneralFiniteType}, but the idea is as follows:

Given a trivalent diagram $\Gamma$ (see \refS{FiniteType}), one can construct a bundle 
$$
\pi\colon\Conf[p,q;\K^3, \R^n]\longrightarrow\K^3,
$$
where $\K^3$ is the space of knots in $\R^3$.  Here $p$ and $q$ are the numbers of certain kinds of vertices in $\Gamma$ and $ \Conf[p,q;\K^3, \R^n]$ is a pullback space constructed from an evaluation map and a projection map.  The fiber of $\pi$ over a knot $K\in\K^3$ is the compactified configuration space of $p+q$ points in $\R^3$, first $p$ of which are constrained to lie on $K$.  The edges of $\Gamma$ also give a prescription for pulling back a product of volume forms on $S^{2}$ to $\Conf[p,q;\K^3, \R^n]$.  The resulting form can then be integrated along the fiber, or pushed forward, to $\K^3$.  The dimensions work out so that this is a 0-form and, after adding the pushforwards over all trivalent diagrams of a certain type, this form is in fact closed, i.e.~it is an invariant.  Thurston then proves that this is a finite type invariant and that this procedure gives all finite type invariants.
\bigskip

The next generalization was carried out by Cattaneo, Cotta-Ra\-mu\-si\-no, and Longoni \cite{CCRL}.  Namely, let $\K^n$, $n>3$, be the space of knots in $\R^n$.  The main result of \cite{CCRL} is that there is a cochain map
\begin{equation}\label{E:CCRLMap}
\mathcal{D}^n\longrightarrow \Omega^*(\K^n)
\end{equation}
between a certain diagram complex $\mathcal{D}^n$ generalizing trivalent diagrams and the deRham complex of $\K^n$.  The map is given by exactly the same integration procedure as Thurston's, except the degree of the form that is produced on $\K^n$ is no longer zero.  Specializing to classical knots (where there is no longer a cochain map due to the so-called ``anomalous face"; see \refS{GeneralFiniteType}) and degree zero, one recovers the work of Thurston.  Cattaneo, Cotta-Ramusino, and Longoni have used the map \eqref{E:CCRLMap} to show that spaces of knots have cohomology in arbitrarily high degrees in \cite{CCRL:Struct} by studying certain algebraic structures on $\mathcal{D}^n$ that correspond to those in the cohomology ring of $\K^n$.  Longoni also proved in \cite{Long:Classes} that some of these classes arise from non-trivalent diagrams. 
\bigskip

Even though configuration space integrals were in all of the aforementioned work constructed for ordinary closed knots, it has in recent years become clear that the variant for long knots is also useful. Because some of the applications we describe here have a slight  preference for the long version, this  is the space we will work with.  The difference between the closed and the long version is minimal from the perspective of this paper, as  explained at the beginning of \refS{KnotsLinks}.
\bigskip

More recently, configuration space integrals have been generalized to (long) links, homotopy links, and braids \cite{KMV:FTHoLinks, V:B-TLinks}, and this work is summarized in \refS{Links}.  One nice feature of this generalization is that it provides the connection to Milnor invariants.  This is because configuration space integrals give finite type invariants of homotopy links, and, since Milnor invariants are finite type, this immediately gives integral expressions for these classical invariants.
\bigskip

We also describe two more surprising applications of configuration space integrals.  Namely, one can use \emph{manifold calculus of functors} to place finite type invariants in a more homotopy-theoretic setting as described in \refS{Calculus}.  Functor calculus also combines with the \emph{formality of the little $n$-discs operad}  to give a description of the rational homology of $\K^n$, $n>3$.  Configuration space integrals play a central role here since they are at the heart of the proof of operad formality.  Some details about this are provided in \refS{Formality}.
\bigskip

In order to keep the focus of this paper on knot and links and keep its length to a manageable size, we will regrettably  only point the reader to three other topics that are growing in promise and popularity.  The first is the work of Sakai \cite{Sakai:BTLongPlanes} and its expansion by Sakai and Watanabe \cite{SW:1-loopGraph} on \emph{long planes}, namely embeddings of $\R^k$ in $\R^n$ fixed outside a compact set.  These authors use configuration space integrals to produce nontrivial cohomology classes of this space with certain conditions on $k$ and $n$. This work generalizes classes produced by others \cite{CR:Wilson, Wat:n-knots} and complements recent work by Arone and Turchin \cite{AT:LongPlanes2} who show, using homotopy-theoretic methods, that the homology of $\Emb(\R^k,\R^n)$ is given by a certain graph complex for $n\geq 2k+2$.  Sakai has further used configuration space integrals to produce a cohomology class of $\K^3$ in degree one that is related to the Casson invariant  \cite{Sakai:Nontrivalent} and has given a new interpretation of the Haefliger invariant for $\Emb(\R^k,\R^n)$ for some $k$ and $n$ \cite{Sakai:BTLongPlanes}.  In an interesting bridge between two different points of view on spaces of knots, Sakai has in \cite{Sakai:BTLongPlanes} also combined the configuration space integrals with Budney's action of the little discs operad on $\K^n$ \cite{Bud:LCLK}. 
\bigskip

The other interesting development is the recent work of Koytcheff \cite{Koyt:HomotopyBT} who develops a homotopy-theoretic replacement of configuration space integrals.  He uses the Pontryagin-Thom construction to ``push forward" forms from $\Conf[p,q;\K^n,\R^n]$ to $\K^n$.  The advantage of this approach is that is works over any coefficients, unlike ordinary configuration space integration, which takes values in $\R$.  A better understanding of how Koytcheff's construction relates to the original configuration space integrals is still needed.
\bigskip

The third topic is the role configuration space integrals have recently played in the construction of asymptotic finite type invariants of divergence-free vector fields \cite{KV:FinTypeVectFields}.  The approach in this work is to apply configuration space integrals to trajectories of a vector field.  In this way, generalizations of some familiar asymptotic vector field invariants like asymptotic linking number, helicity, and the asymptotic signature can be derived.
\bigskip

Lastly, some notes on the style and expositional choices we have made in this paper are in order.  We will assume an informal tone, especially at times when writing down something precisely would require us to introduce cumbersome notation.   To quote from a friend and coauthor Brian Munson \cite{M:MfldCalc}, ``we will frequently omit arguments which would distract us from our attempts at being lighthearted".
Whenever this is the case, a reference to the place where the details appear will be supplied. In particular, most of the proofs we present here have been worked out in detail elsewhere, and if we feel that the original source is sufficient, we will simply give a sketch of the proof and provide ample references for further reading.  It is also worth pointing out that many open problems are stated througout and our ultimate hope is that, upon looking at this paper, the reader will be motivated to tackle some of them.


\subsection{Organization of the paper}\label{S:Organization}


We begin by recall some of the necessary background in \refS{Preliminaries}.  We only give the basics but furnish abundant references for further reading.  In particular, we review integration along the fiber in \refS{Forms} and pay special attention to integration for infinite-dimensional manifolds and manifolds with corners.  In \refS{KnotsLinks} we define the space of long knots and state some observations about it.  A review of finite type invariants is provided in \refS{FiniteType}; they will play a central role later.  This section also includes a discussion of chord diagrams and trivalent diagrams. Finally in \refS{Compactification}, we talk about configuration spaces and their Fulton-MacPherson compactification.  These are the spaces over which our integration will take place.
\bigskip

\refS{ConfFTInvs} is devoted to the construction of finite type invariants via configuration space integrals.  The motivating notion of the linking number is recalled in \refS{LinkingNumber}, and that leads to the failed construction of the ``self-linking" number in \refS{Self-Linking} and its improvement to the simplest finite type (Casson) invariant in \refS{Casson}.  This section is at the heart of the paper since it gives all the necessary ideas for all of the constructions encountered from then on.  Finally in \refS{GeneralFiniteType} we construct all finite type knot invariants via configuration space integrals.
\bigskip

\refS{HigherKnots} is dedicated to the description of the cochain map \eqref{E:CCRLMap} and includes the definition of the cochain complex $\mathcal D^n$.  We also discuss how this generalizes D. Thurston's construction that yields finite type invariants.
\bigskip

Finally in \refS{Generalizations}, we give brief accounts of some other features, generalizations, and applications of configuration space integrals.  More precisely, in \refS{Links}, we generalize the constructions we will have seen for knots to links, homotopy links, and braids; in \refS{Calculus}, we explore the connections between manifold calculus of functors and configuration space integrals; and in \refS{Formality}, we explain how configuration space integrals are used in the proof of the formality of the little $n$-discs operad and how this leads to information about the homology of spaces of knots.


\section{Preliminaries}\label{S:Preliminaries}



\subsection{Differential forms and integration along the fiber}\label{S:Forms}


The strategy we will employ in this paper is to produce differential forms on spaces of knots and links via configuration space integrals.
Since introductory literature on differential forms is abundant (see for example \cite{BT:DiffForms}), we will not recall their definition here.    We also assume the reader is familiar with integration of forms over manifolds.
\bigskip

We will, however, recall some terminology that will be used throughout:  Given a smooth oriented manifold $M$, one has the \emph{deRham cochain complex $\Omega^*(M)$ of differential forms}:
$$
0\longrightarrow \Omega^0(M)\stackrel{d}{\longrightarrow} \Omega^1(M)\stackrel{d}{\longrightarrow} \Omega^2(M)\longrightarrow\cdots
$$
where $\Omega^k(M)$ is the space of smooth $k$-forms on $M$.  The differential $d$ is the \emph{exterior derivative}.  A form $\alpha\in\Omega^k(M)$ is \emph{closed} if $d\alpha=0$ and \emph{exact} if $\alpha=d\beta$ for some $\beta\in\Omega^{k-1}(M)$.  The $k$th deRham cohomology group of $M$, $\Ho^k(M)$, is defined the usual way as the kernel of $d$ modulo the image of $d$, i.e.~the space of closed forms modulo the subspace of exact forms. All the cohomology we consider will be over $\R$. 
\bigskip

The \emph{wedge product}, or \emph{exterior product}, of differential forms gives $\Omega^*(M)$ the structure of an algebra, called the \emph{deRham}, or \emph{exterior algebra of $M$}.
\bigskip

According to the deRham Theorem, deRham cohomology is isomorphic to the ordinary singular cohomology.  In particular, $\Ho^0(M)$ is the space of functionals on connected components of $M$, i.e.~the space of \emph{invariants} of $M$.  The bulk of this paper is concerned with invariants of knots and links.
\bigskip

The complex $\Omega^*(M)$ can be defined for manifolds with boundary by simply restricting the form to the boundary.  Locally, we take restrictions of forms on open subsets of $\R^k$ to $\R^{k-1}\times \R_+$.  Further, one can define differential forms on \emph{manifolds with corners} (an $n$-dimensional manifold with corners is locally modeled on $\R^k_+\times \R^{n-k}$, $0\leq k\leq n$; see \cite{Joyce:MfldsCrnrs} for a nice introduction to these spaces) in exactly the same fashion by restricting forms to the boundary components, or \emph{strata} of $M$.
\bigskip

The complex $\Omega^*(M)$ can also be defined for infinite-dimensional manifolds such as the spaces of knots and links we will consider here.  One usually considers the forms on the vector space on which $M$ is locally modeled and then patches them together into forms on all of $M$.  When $M$ satisfies conditions such as paracompactness, this ``patched-together" complex again computes the ordinary cohomology of $M$.  Another way to think about forms on an infinite-dimensional manifold $M$ is via the \emph{diffeological} point of view which considers forms on open sets mapping into $M$.  For more details, see \cite[Section 2.2]{KMV:FTHoLinks} which gives further references.
\bigskip

Given a smooth fiber bundle $\pi\colon E\to B$ whose fibers are compact oriented $k$-dimensional  manifolds, there is a map
\begin{equation}\label{E:Pushforward}
\pi_*\colon \Omega^n(E)\longrightarrow \Omega^{n-k}(B)
\end{equation}
called the \emph{pushforward} or \emph{integration along the fiber}.  The idea is to define the form on $B$ pointwise by integrating over each fiber of $\pi$.  Namely, since $\pi$ is a bundle, each point $b\in B$ has a $k$-dimensional neighborhood $U_b$ such that $\pi^{-1}(b)\cong B\times U_b$.  Then a form $\alpha\in \Omega^n(E)$ can be restricted to this fiber, and the coordinates on $U_b$ can be ``integrated away".  The result is a form on $B$ whose dimension is that of the original form but reduced by the dimension of the fiber.  The idea is to then patch these values together into a form on $B$.  Thus the map \eqref{E:Pushforward} can be described by
\begin{equation}\label{E:Pushforward2}
\alpha\longmapsto
\left(
b\longmapsto \int_{\pi^{-1}(b)}\alpha
\right).
\end{equation}

In terms of evaluation on cochains, $\pi_*\alpha$ can be thought of as an $(n-k)$-form on $B$ who value on a $k$-chain $X$ is 
$$
\int_X \pi_*\alpha=\int_{\pi^{-1}(X)}\alpha.
$$ 
For introductions to the pushforward, see \cite{BT:DiffForms, HG:Pushforward}.

\begin{rem1}
The assumption that the fibers of $\pi$ be compact can be dropped, but then forms with compact support should be used to guarantee the convergence of the integral.
\end{rem1}

To check if $\pi_*\alpha$ is a closed form, it suffices to integrate $d\alpha$, i.e.~we have \cite[Proposition 6.14.1]{BT:DiffForms}
$$
d\pi_*\alpha=\pi_*d\alpha.
$$
The situation changes when the fiber is a manifold with boundary or with corners, as will be the case for us.  Then by Stokes' Theorem, $d\pi_*\alpha$ has another term.  Namely, we have
\begin{equation}\label{E:dStokes}
d\pi_*\alpha=\pi_*d\alpha + (\partial\pi)_*\alpha
\end{equation}
where $(\partial\pi)_*\alpha$ is the integral of the restriction of $\alpha$ to the codimension one boundary of the fiber.  This can be seen by an argument similar to the proof of the ordinary Stokes' Theorem.  For Stokes' Theorem for manifolds with corners, see, for example \cite[Chapter 10]{Lee:SmthMflds}.
\bigskip

In the situations we will encounter here, $\alpha$ will be a closed form, in which case $d\alpha=0$.  Thus the first term in the above formula vanishes, so that we get
\begin{equation}\label{E:dStokesClosed}
d\pi_*\alpha=(\partial\pi)_*\alpha.
\end{equation}
Our setup will combine various situations described above -- we will have a   smooth bundle $\pi\colon E\to B$ of infinite-dimensional spaces with fibers that are finite-dimensional compact manifolds with corners.  


\subsection{Space of long knots}\label{S:KnotsLinks}


We will be working with \emph{long} knots and links (links will be defined in \refS{Links}), which are easier to work with than ordinary closed knots and links in many situations.  For example, long knots are $H$-spaces via the operation of stacking, or concatenation, which gives their (co)homology groups more structure.  Also, the applications of manifold calculus of functors to these spaces (\refS{Calculus}) prefer the long model.  Working with long knots is not much different than working with ordinary closed ones since the theory of long knots in $\R^n$ is the same as the theory of based knots in $S^n$.  From the point of view of configuration space integrals, the only difference is that, for the long version, we will have to consider certain \emph{faces at infinity} (see Remark \ref{R:FacesAtInfinity}).
\bigskip

Before we define long knots, we remind the reader that a smooth map $f\colon M\to N$ between smooth manifolds $M$ and $N$ is
\begin{itemize}
\item an \emph{immersion} if the derivative of $f$ is everywhere injective;
\item an \emph{embedding} if it is an immersion and a homeomorphism onto its image.
\end{itemize}

Now let $\Map_c(\R, \R^n)$ be the space of smooth maps of $\R$ to $\R^n$, $n\geq 3$, which outside the standard interval $I$ (or any compact set, really) agrees with the map 
\begin{align*}
\R & \longrightarrow \R^n \\
t & \longmapsto (t,0,0,...,0).
\end{align*}
\bigskip

Give $\Map_c(\R, \R^n)$ the $\calC^{\infty}$ topology (see, for example, \cite[Chapter 2]{Hirsch:DiffTop}).  
\bigskip

\noindent\begin{defi1}\label{D:KnotsAndLinks_}
Define the space of 
\emph{long (or string) knots} $\K^n\subset \Map_c(\R, \R^n)$ as the subset of maps $K\in \Map_c(\R, \R^n)$ that are embeddings, endowed with the subspace topology.
\end{defi1}
A related space that we will occassionally have use for is the space of \emph{long immersions} $\Imm_c(\R, \R^n)$ defined the same way as the space of long knots except its points are immersions.   Note that $\K^n$ is a subspace of $\Imm_c(\R, \R^n)$.
\smallskip

A homotopy in $\K^n$ (or any other space of embeddings) is called an \emph{isotopy}.  A homotopy in $\Imm_c(\R, \R^n)$ is a \emph{regular homotopy}.
\bigskip

An example of a long knot is given in Figure \ref{F:KnotExample}. Note that we have confused the map $K$ with its image in $\R^n$.  We will do this routinely and it will not cause any issues.
\begin{figure}[!htbp]
\begin{center}
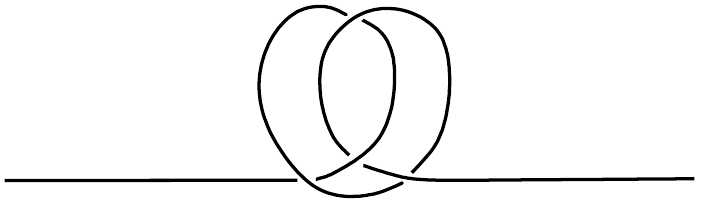
\caption{An example of a knot in $\R^n$. }
\label{F:KnotExample}
\end{center}
\end{figure} 

From now on, the adjective ``long" will be dropped; should we need to talk about closed knots, we will say this explicitly.  
\bigskip

The space $\K^n$ is a smooth inifinite-dimensional paracompact manifold \cite[Section 2.2]{KMV:FTHoLinks}, which means that, as explained in \refS{Forms}, we can consider differential forms on it and study their deRham cohomology. 
\bigskip

Classical knot theory ($n=3$) is mainly concerned with computing 
\begin{itemize}
\item $\Ho_0(\K^3)$, which is generated (over $\R$; recall that in this paper, the coefficient ring is always $\R$) by knot types, i.e.~by isotopy classes of knots; and
\item $\Ho^0(\K^3)$, the set of knot invariants, namely locally constant ($\R$-valued) functions on $\K^3$.  These are therefore precisely the functions that take the same value on isotopic knots.
\end{itemize}

The question of computation of knot invariants will be of particular interest to us (see \refS{ConfFTInvs}).   
\bigskip

However, higher (co)homology of $\K^n$ is also interesting, even for $n>3$. Of course, in this case there is no knotting or linking (by a simple general position argument),  so $\Ho^0$ and $\Ho_0$ are trivial, but one can then ask about $\Ho^{>0}$ and $\Ho_{>0}$.  It turns out that our configuration space integrals contain much information about cohomology in various degrees and for all $n>3$ (see \refS{HigherKnots}).


\subsection{Finite type invariants}\label{S:FiniteType}


An interesting set of knot  invariants that our configuration space integrals will produce are \emph{finite type}, or \emph{Vassiliev} invariants.  These invariants are conjectured to \emph{separate} knots, i.e.~to form a \emph{complete} set of invariants.  To explain, there is no known invariant or a set of invariants (that is reasonably computable) with the following property:  

 \begin{quote}
 \emph{Given two non-isotopic knots, there is
 an invariant in this set that takes on different values on these two knots.}
 \end{quote}

The conjecture that finite type invariants form such a set of invariants has been open for some twenty years.  There is some evidence that this might be true since finite type invariants do separate homotopy links \cite{HabLin-Classif} and braids \cite{BN:HoLink, Kohno:LoopsFiniteType} (see \refS{Links} for the definitions of these spaces).
\bigskip

Since finite type invariants will feature prominently in \refS{ConfFTInvs}, we give a brief overview here.  In addition to the separation conjecture, these invariants have received much attention because of their connection to physics (they arise from Chern-Simons Theory), Lie algebras, three-manifold topology, etc. The literature on finite type invariants is abundant, but a good start is  \cite{BN:Vass, CDM:FTInvs}.
\bigskip

Suppose $V$ is a knot invariant, so $V\in\Ho^0(\K^3)$.  Consider the space of \emph{singular links}, which is the subspace of $\Imm_c(\R, \R^n)$ consisting of immersions that are embeddings except for a finite number of double-point self-intersections at which the two derivatives (coming from traveling through the singularity along two different pieces of the knot) are linearly independent.
\bigskip

Each singularity can be locally ``resolved" in two natural ways (up to isotopy), with one strand pushed off the other in one of two directions.  A $k$-singular knot (a knot with $k$ self-intersections) can thus be resolved into $2^k$ ordinary embedded knots.  We can then define $V$ on singular knots as the sum of the values of $V$ on those resolutions, with signs as prescribed in Figure \ref{F:SkeinRelation}.  The expression given in that picture is called the \emph{Vassiliev skein relation} and it depicts the situation locally around a singularity.  The rest of the knot is the same for all three pictures.  The knot should be oriented (by, say, the natural orientation of $\R$); otherwise the two resolutions can be rotated into one another.

\begin{figure}[!htbp]
\begin{center}
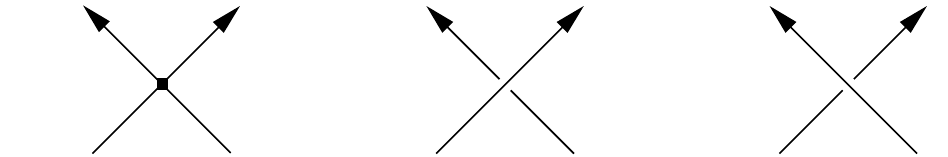
\caption{Vassiliev skein relation.}
\label{F:SkeinRelation}
\end{center}
\end{figure} 

\begin{defi1}
A knot invariant $V$ is \emph{finite type $k$} (or \emph{Vassiliev of type $k$}) if it vanishes on singular knots with $k+1$ self-intersections.  
\end{defi1}

\begin{exam1}
There is only one (up to constant multiple) type 0 invariant, since such an invariant takes the same value on two knots that only differ by a crossing change.  Since all knots are related by crossing changes, this invariant must take the same value on all knots. It is not hard to see that there is also only one type 1 invariant.
\end{exam1}

\begin{exam1}
The coefficients of the Conway, Jones, HOMFLY, and Kauffman polynomials are all finite type invariants \cite{BN:Thesis, BL:Vass}.
\end{exam1}

Let $\calV_k$ be the real vector space generated by all finite type $k$ invariants and let 
$$
\calV=\bigoplus_k \calV_k.
$$
This space is filtered; it is immediate from the definitions that $\calV_k\subset\calV_{k+1}$.
\bigskip

One of the most interesting features of finite type invariants is that a value of a type $k$ invariant $V$ on a $k$-singular knot only depends on the placement of the singularities and not on the immersion itself.  This is due to a simple observation that, if a crossing of a $k$-singular knot is changed, the difference of the evaluation of $V$ on the two knots (before and after the switch) is the value of $V$ on a $(k+1)$-singular knot by the Vassiliev skein relation.  But $V$ is type $k$ so the latter value is zero, and hence $V$ ``does not see" the crossing change.  Since one can get from any singular knot to any other singular knot that has the singularities in the same place (``same place" in the sense that for both knots, there are $2k$ points on $\R$ that are partitioned in pairs the same way; these pairs will make up the $k$ singularities upon the immersion of $\R$ in $\R^3$), $V$ in fact takes the same value on all $k$ singular knots with the same singularity pattern.
\bigskip

The notion of what it means for singularities to be in the ``same place" warrants more explanation and leads to the beautiful and rich connections between finite type invariants and the combinatorics of \emph{chord diagrams} as follows.

\begin{defi1}\label{D:ChordDiagram} A \emph{chord diagram of degree $k$} is a connected graph (one should think of a 1-dimensional cell complex) consisting of an oriented line segment and $2k$ labeled \emph{vertices} marked on it (considered up to orientation-preserving diffeomorphism of the segment).   The graph also contains $k$ oriented \emph{chords} pairing off the vertices (so each vertex is connected to exactly one other vertex by a chord).  
\end{defi1}

We will refer to labels and orientations as \emph{decorations} and, when there is no danger of confusion, we will sometimes draw diagrams without them.
\bigskip

Examples of chord diagrams are given in Figure \ref{F:ChordDiagrams}.  The reader might wish for a more proper combinatorial (rather than descriptive) definition of a chord diagram, and such a definition can be found in \cite[Section 3.1]{KMV:FTHoLinks} (where the  definition is for the case of \emph{trivalent} diagrams which we will encounter below, but it specializes to chord diagrams as the latter are a special case of the former).

\begin{figure}[!htbp]
\begin{center}
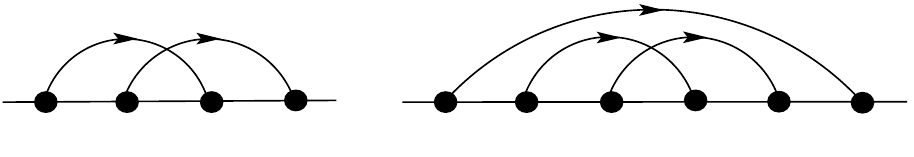
\caption{Examples of chord diagrams. The left one is of degree 2 and the right one is of degree 3. We will always assume the segment is oriented from left to right.}
\label{F:ChordDiagrams}
\end{center}
\end{figure} 

\begin{defi1}\label{D:ChordDiagramSpace}
Define $\mathcal{CD}_k$ to be the real vector space generated by chord diagrams of degree $k$ modulo  the relations
\begin{enumerate}
\item If $\Gamma$ contains more than one chord connecting two vertices, then $\Gamma=0$;
\item  If a diagram $\Gamma$ differs from $\Gamma'$ by a relabeling of vertices or orientations of chords, then 
$$
\Gamma-(-1)^{\sigma}\Gamma'=0
$$
where $\sigma$ is the sum of the order of the permutation of the labels and the number of chords with different orientation;
\item If $\Gamma$ contains a chord connecting two consecutive vertices, then $\Gamma=0$ (this is the \emph{one-term}, or \emph{1T relation});
\item If four diagrams differ only in portions as pictured in Figure \ref{F:4T}, then their sum is 0 (this is the \emph{four-term}, or \emph{4T relation}).
\end{enumerate}
\end{defi1}

\begin{figure}[!htbp]
\begin{center}
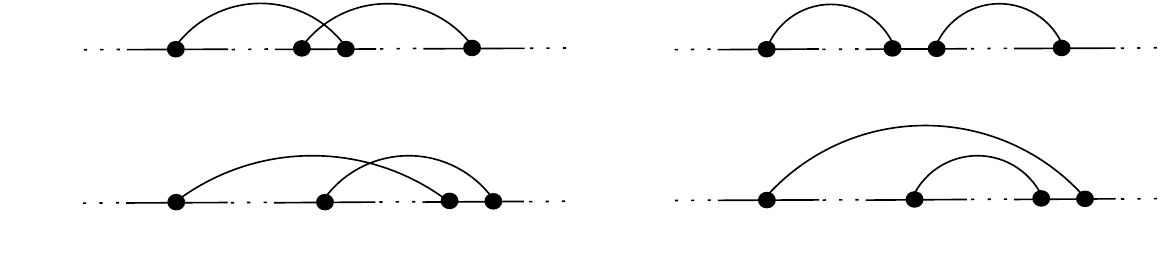
\caption{The four-term (4T) relation.  Chord orientations have been omitted, but they should be the same for all four pictures.}
\label{F:4T}
\end{center}
\end{figure} 

We can now also define the graded vector space
$$
\mathcal{CD}=\bigoplus_k \mathcal{CD}_k.
$$
Let $\mathcal{CW}_k=\Hom(\mathcal{CD}_k, \R)$, the dual of $\mathcal{CD}_k$.  This is called the space of \emph{weight systems of degree $k$}, and is by definition the space of functionals on chord diagrams that vanish on 1T and 4T relations. 
\bigskip

We can now define a function
\begin{align*}
f\colon \calV_k & \longrightarrow \mathcal{CW}_k \\
          V  &  \longrightarrow  
\left( 
\begin{tabular}{rl}
 $W\colon \mathcal{CD}_k$  & $\longrightarrow \R$  \\
 $\Gamma$  & $\longmapsto V(K_{\Gamma})$
 \end{tabular}
 \right)
\end{align*}

where $K_{\Gamma}$ is any $k$-singular knot with singularities as prescribed by $\Gamma$.  By this we mean that there are $2k$ points on $\R$ labeled the same way as in $\Gamma$, and if $x$ and $y$ are points for which there exists a chord in $\Gamma$, then $K_{\Gamma}(x)=K_{\Gamma}(y)$.  The map $f$ is well-defined because of the observation that type $k$ invariant does not depend on the immersion when evaluated on a $k$-singular knot.
\bigskip

The reason that the image of $f$ is indeed in $\mathcal{CW}_k$, i.e.~the reason that the function $W$ we defined above vanishes on the 1T and 4T relations is not hard to see (in fact, 1T and 4T relations are part of the definition of $\mathcal{CD}_k$ precisely because $W$ vanishes on them):   1T relation corresponds to the singular knot essentially having a loop at the singularity; resolving those in two ways results in two isotopic knots on which $V$ has to take the same value (since it is an invariant).  Thus the difference of those values is zero, but by the skein relation, $V$ is then zero on a knot containing such a singularity.  The vanishing on the 4T relation arises from the fact that passing a strand around a singularity of a $(k-1)$-singular knot introduces four $k$-singular knots, and the 4T relation reflects the fact that at the end one gets back to the same $(k-1)$-singular knot.
\bigskip

It is also immediate from the definitions that the kernel of $f$ is precisely type $k-1$ invariants, so that we have an injection (which we will denote by the same letter $f$)
\begin{equation}\label{E:EasyDirection}
f\colon \calV_k/\calV_{k-1} \hookrightarrow \mathcal{CW}_k.
\end{equation}

The following theorem is usually referred to as the \emph{Fundamental Theorem of Finite Type Invariants}, and is due to Kontsevich \cite{K:Fey}.

\begin{thm1}\label{T:Kontsevich}
The map $f$ from equation \eqref{E:EasyDirection} is an isomorphism.
\end{thm1}

Kontsevich proves this remarkable theorem by constructing the inverse to $f$, a map defined by integration that is now known as the \emph{Kontsevich Integral}.  There are now several proof of this theorem, and the one relevant to us gives the inverse of $f$ in terms of configuration space integrals.  See Remark \ref{R:BTProvesKontsevich} for details.
\bigskip

Lastly we describe an alternative space of diagrams that will be better suited for our purposes.

\begin{defi1}\label{D:TrivalentDiagram}
A \emph{trivalent diagram of degree $k$} is a connected graph consisting of an oriented line segment (considered up to orientation-preserving diffeomorphism) and $2k$ labeled vertices of two types: \emph{segment vertices}, lying on the segment, and \emph{free vertices}, lying off the segment.   The graph also contains some number of oriented chords connecting  segment vertices and some number of oriented edges connecting two free vertices or a free vertex and a segment vertex.  Each vertex is trivalent, with the segment adding two to the count of the valence of a segment vertex. 
\end{defi1}

Note that chord diagrams as described in \refD{ChordDiagram} are also trivalent diagrams.  Examples of trivalent diagrams that are not chord diagrams are given in  Figure \ref{F:TrivalentDiagrams}.

\begin{figure}[!htbp]
\begin{center}
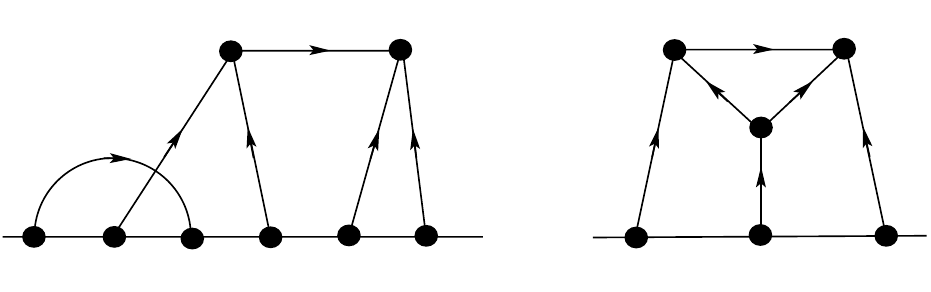
\caption{Examples of trivalent diagrams (that are not chord diagrams). The left one is of degree 4 and the right one is of degree 3.}
\label{F:TrivalentDiagrams}
\end{center}
\end{figure} 

As mentioned before, a more combinatorial definition of trivalent diagrams is given in  \cite[Section 3.1]{KMV:FTHoLinks}.

Analogously to \refD{ChordDiagramSpace}, we have

\begin{defi1}\label{D:TrivalentDiagramSpace}
Define $\mathcal{TD}_k$ to be the real vector space generated by trivalent diagrams of degree $k$ modulo  the relations
\begin{enumerate}
\item If $\Gamma$ contains more than one edge connecting two vertices, then $\Gamma=0$;
\item  If a diagram $\Gamma$ differs from $\Gamma'$ by a relabeling of vertices or orientations of chords or edges, then 
$$
\Gamma-(-1)^{\sigma}\Gamma'=0
$$
where $\sigma$ is the sum of the order of the permutation of the labels and the number of chords and edges with different orientation;
\item If $\Gamma$ contains a chord connecting two consecutive segment vertices, then $\Gamma=0$ (this is same 1T relation from before);
\item If three diagrams differ only in portions as pictured in Figure \ref{F:STU-IHX}, then their sum is 0 (these are called the \emph{STU} and \emph{IHX relation}, respectively.).
\end{enumerate}
\end{defi1}

\begin{figure}[!htbp]
\begin{center}
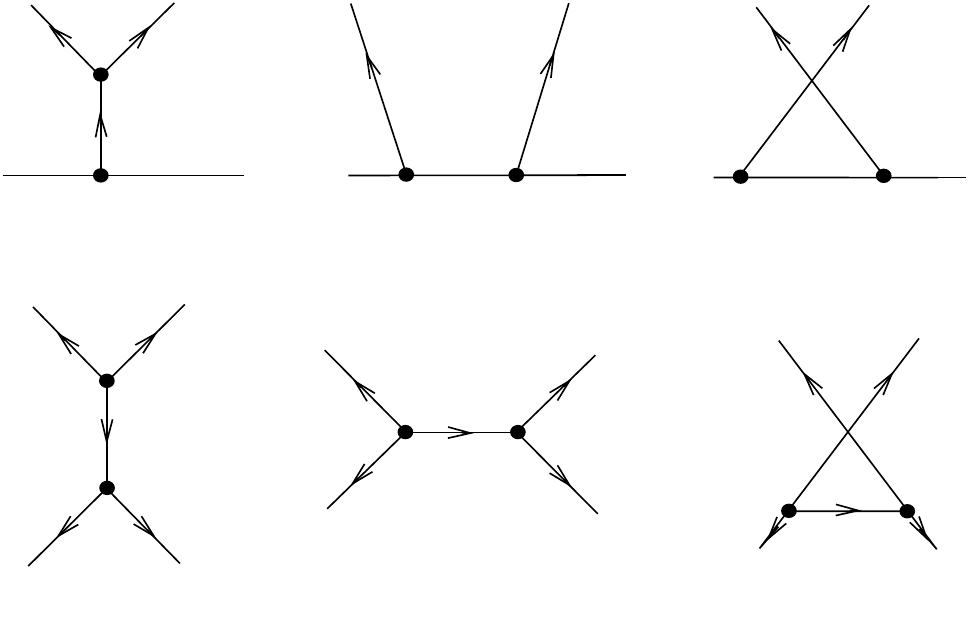
\caption{The STU and the IHX relations.}
\label{F:STU-IHX}
\end{center}
\end{figure} 

Finally let 
$$
\mathcal{TD}=\bigoplus_k \mathcal{TD}_k.
$$

\begin{rem1}
The IHX relations actually follows from the STU relation \cite[Figure 9]{BN:Vass}, but it is important enough that it is usually left in the definition of the space of trivalent diagrams (it gives a connection between finite type invariants and Lie algebras).
\end{rem1}

\begin{thm1}[\cite{BN:Vass}, Theorem 6]\label{T:Chord=Trivalent}
There is an isomorphism 
$$
\mathcal{CD}_k\cong\mathcal{TD}_k.
$$
\end{thm1}

The isomorphism is given by using the STU relation repeatedly to remove all free vertices and turn a trivalent diagram into a chord diagram.  The relationship between the STU and the 4T relation is that the latter is the difference of two STU relations applied to two different edges of the ``tripod" diagram in Figure \ref{F:Tripod}.
\bigskip

One can now again define the space of weight systems of degree $k$ for trivalent diagrams as the space of functionals on $\mathcal{TD}_k$ that vanish on the 1T, STU, and IHX relations.  We will denote these by $\mathcal{TW}_k$.  From \refT{Chord=Trivalent}, we then have
\begin{equation}\label{E:SameWeightSystems}
\mathcal{CW}_k\cong\mathcal{TW}_k.
\end{equation}


\subsection{Configuration spaces and their compactifications}\label{S:Compactification}


At the heart of the construction of our invariants and other cohomology classes of spaces of knots are configuration spaces and their compactifications.

\begin{defi1}\label{D:ConfSpace}
The \emph{configuration space of $p$ points in a manifold $M$} is the space
$$
\Conf(p, M)  =
\{(x_1, x_2, ..., x_{p})\in M^{p} \colon x_i\neq x_j \text{ for } i\neq j\}
$$
\end{defi1}

Thus $\Conf(p, M)$ is $M^{p}$ with all the diagonals, i.e.~the \emph{fat diagonal}, taken out.
\bigskip

We take $\Conf(0, M)$ to be a point.  Since configuration spaces of $p$ points on $\R$ or on $S^1$ have $p!$ components, we take $\Conf(p, \R)$ and $\Conf(p, S^1)$ to mean the component where the points $x_1, ..., x_p$ appear in linear order (i.e.~$x_1<x_2<\cdots<x_p$ on $\R$ or the points are encountered in that order as the circle is traversed counterclockwise starting with $x_1$).

\begin{exam1}\label{Ex:ConfSpExamples}
It is not hard to see that $\Conf(1, \R^n)= \R^n$ and $\Conf(2, \R^n)\simeq S^{n-1}$ (where by ``$\simeq$" we mean homotopy equivalence).  The latter equivalence is given by the \emph{Gauss map}  which gives the direction between the two points:
\begin{align}
\phi\colon \Conf(2, \R^n) & \longrightarrow S^{n-1} \label{E:GaussMap} \\
(x_1,x_2) & \longmapsto \frac{x_1-x_2}{|x_1-x_2|}.\notag
\end{align}
\end{exam1}

We will want to integrate over $\Conf(p, \R^n)$, but this is an open manifold and our integral hence may not converge.  The ``correct" compactification to take, in the sense that it has the same homotopy type as $\Conf(p, \R^n)$ is due to \cite{AS, FM}, and is defined as follows. 
\bigskip 

Recall that that, given a submanifold $N$ of a manifold $M$, the \emph{blowup of $M$ along $N$}, $\operatorname{Bl}(M,N)$, is obtained by replacing $N$ by the unit normal bundle of $N$ in $M$. 

\begin{defi1}\label{D:Compactification}
 Define the \emph{Fulton-MacPherson compactification} of $\Conf(k, M)$, denoted by $\Conf[k,M]$, to be the closure of the image of the inclusion
$$
\Conf(p,M)\longrightarrow M^p\times \prod_{S\subset\{1, ..., p\},\ |S|\geq 2} \operatorname{Bl}(M^S,\Delta_S),
$$
where $M^S$ is the product of the copies of $M$ indexed by $S$ and $\Delta_S$ is the (thin) diagonal in $M^S$, i.e.~the set of points $(x,...,x)$ in $M^S$.
\end{defi1}

Here are some properties of $\Conf[k,M]$ (details and proofs can be found in \cite{S:Compact}):
\begin{itemize}
\item $\Conf[k,M]$ is homotopy equivalent to $\Conf(k, M)$;
 
\item $\Conf[k,M]$ is a manifold with corners, compact when $M$ is compact;
 
\item The boundary of $\Conf[k,M]$ is characterized by points colliding with directions and relative rates of collisions kept track of.  In other words, three points colliding at the same time gives a different point in the boundary than two colliding, then the third joining them;
 
\item The stratification of the boun\-dary is given by stages of co\-lli\-sions of poin\-ts, so if three points collide at the same time, the re\-sul\-ting li\-mi\-ting configuration lies in a codimension one stratum.  If two come to\-ge\-ther and then the third joins them, this gives a point in a co\-di\-men\-sion two stratum since the collision happened in two stages.  In general, a $k$-stage collision gives a point in a co\-di\-men\-sion $k$ stratum.

\item In particular, codimension one boundary of $\Conf[k,M]$ is given by points colliding at the same time.  This will be important in \refS{HigherKnots}, since integration along codimension one boundary is required for Stokes' Theorem.

\item Collisions can be efficiently encoded by different \emph{parenthesizations}, e.g.~the situations described two items ago are parenthesized as $(x_1x_2x_3)$ and $((x_1x_2)x_3)$.  Since parenthesizations are related to trees, the stratification of  $\Conf[k,M]$ can thus also be encoded by trees and this leads to various connections to the theory of \emph{operads} (we will not need this here);

\item $\Conf[k,\R]$ is the \emph{associahedron}, a classical object from homotopy theory;
 
\end{itemize}

Additional discussion of the stratification of $\Conf[p,M]$ can be found in \cite[Section 4.1]{KMV:FTHoLinks}.

\begin{rem1}\label{R:FacesAtInfinity}
Since we will consider configurations on long knots, and these live in $\R^n$, we will think of $\Conf[p,\R^n]$  as the subspace of $\Conf[p+1,S^n]$ where the last point is fixed at the north pole.  Consequently, we will have to consider additional strata given by points escaping to infinity, which corresponds to points colliding with the north pole.
\end{rem1}

\begin{rem1}\label{R:PartialBlowups}
All the properties of the com\-pac\-ti\-fi\-ca\-tions we mentioned hold e\-qua\-lly well in the case where some, but not all, diagonals are blown up.  One can think of constructing the compactification by blowing up the diagonals one at a time, and the or\-der in which we blow up does not matter.  Upon each blow\-up, one ends up with a ma\-ni\-fold with corners.  This ``partial blowup" will be needed in the proof of \refP{ThreeStrataVanishChord} (see also Remark \ref{R:DisconnectedStratum}).
\end{rem1}


\section{Configuration space integrals and finite type knot invariants}\label{S:ConfFTInvs}



\subsection{Motivation:  The linking number}\label{S:LinkingNumber}


Let $\Map_c(\R\sqcup\R, \R^3)$ be the space of smooth maps which are fixed outside some compact set as in the setup leading to \refD{KnotsAndLinks_}  (see \refD{KnotsAndLinks} for details) and define the space of \emph{long (or string) links with two components}, $\Lk_2^3$, as the subspace of $\Map_c(\R\sqcup\R, \R^3)$ given by embeddings.  
\bigskip

Now recall the definition of the configuration space from \refD{ConfSpace} and the Gauss map $\phi$ from \refEx{ConfSpExamples}. Consider the map

\begin{align*}
\Phi\colon  \R\times\R\times\Lk_2^3 & \stackrel{ev}{\longrightarrow}    \Conf(2, \R^3)  \stackrel{\phi}{\longrightarrow}  S^2  \\
 (x_1, x_2, L=(K_1, K_2)) & \longmapsto   (K_1(x_1), K_2(x_2))\longmapsto   \frac{K_2(x_2)-K_1(x_1)}{|K_2(x_2)-K_1(x_1)|}
\end{align*}
So $ev$ is the evaluation map which picks off two points  in $\R^3$, one on each of the strands in the image of a link $L\in\Lk_2^3$, and $\phi$ records the direction between them.  The picture of $\Phi$ is given in Figure \ref{F:LinkingNumber}.

\begin{figure}[!htbp]
\begin{center}
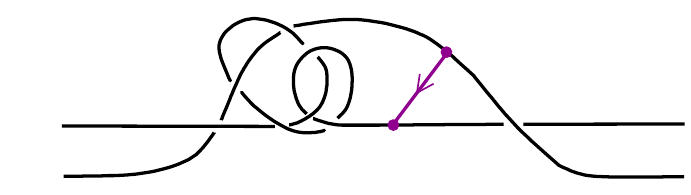
\caption{The setup for the computation of the linking number.}
\label{F:LinkingNumber}
\end{center}
\end{figure} 

Also consider the projection map 
$$
\pi\colon \R\times\R\times\Lk_2^3 \longrightarrow \Lk_2^3
$$
which is a trivial bundle and we can thus perform integration along the fiber on it as described in \refS{Forms}.
\bigskip

Putting these maps together, we have a diagram 
$$
\xymatrix{
 \R\times\R\times\Lk_2^3\ar[r]^-{\Phi}\ar[d]
 ^-{\pi}  &    S^2 \\
 \Lk_2^3  &  
}
$$
which, on the complex of deRham cochains (differential forms), gives a diagram
$$
\xymatrix{
\Omega^*( \R\times\R\times\Lk_2^3)\ar[d]
 ^-{\pi_*}  &    \Omega^*(S^2)\ar[l]_-{\Phi^*} \\
\Omega^{*-2}(\Lk_2^3)  &  
}
$$
Here $\Phi^*$ is the usual pullback and $\pi_*$ is the integration along the fiber.
\bigskip

We will now produce a form on $\Lk_2^3$ by starting with a particular form on $S^2$.  So let $sym_{S^2}\in\Omega^2(S^2)$ be the \emph{unit symetric volume form on $S^2$}, i.e.
$$
sym_{S^2}=\frac{x\, dydz-y\, dxdz+z\, dxdy}{4\pi(x^2+y^2+z^2)^{3/2}}.
$$
This is the form that integrates to 1 over $S^2$ and is rotation-invariant.
\bigskip

Let $\alpha=\Phi^*(sym_{S^2})$.  
\begin{defi1}\label{D:LinkingNumber}
The \emph{linking number} of the link $L=(K_1,K_2)$ is
$$
Lk(K_1,K_2)=\pi_*\alpha=\int_{\R\times\R}\alpha.
$$
\end{defi1}
The expression in this definition is the famous \emph{Gauss integral}.
Since both the form $sym_{S^2}$ and the fiber $\R\times\R$ are two dimensional, the resulting form is 0-dimensional, i.e.~it is a function that assigns a number to each two-component link.  The first remarkable thing is that this form is actually closed, so that the linking number is an element of $\Ho^0(\Lk_2^3)$, an invariant.  The second remarkable thing is that the linking number is actually an integer because it essentially computes the degree of the Gauss map.  Another way to think about this is that the linking number counts the number of times one strand of $L$ goes over the other in a projection of the link, with signs.
\begin{rem1} The reader should not be bothered by the fact that the domain of integration is not compact.  As will be shown in the proof of \refP{ThreeStrataVanishChord}, the integral along \emph{faces at infinity} vanishes.
\end{rem1}


\subsection{``Self-linking" for knots}\label{S:Self-Linking}


One could now try to adapt the procedure that produced the linking number to a single knot in hope of producing some kind of a ``self-linking" knot invariant.
The picture describing this is given in Figure \ref{F:KnotLinking}.

\begin{figure}[!htbp]
\begin{center}
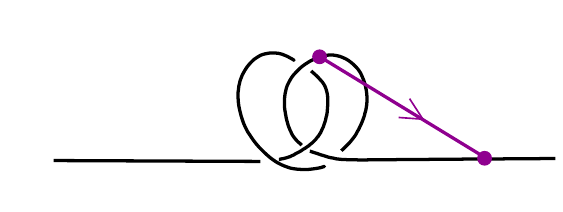
\caption{The setup for the attempted computation of a self-linking number.}
\label{F:KnotLinking}
\end{center}
\end{figure} 

Since the domain of the knot is $\R$, we now take $\Conf(2,\R)$ rather than $\R\times\R$.  Thus the corresponding diagram is 
$$
\xymatrix{
 \Conf(2, \R)\times\K^3\ar[r]^-{\Phi}\ar[d]
 ^-{\pi}  &    S^2 \\
 \K^3  &  
}
$$
The first issue is that the integration over the fiber $\Conf(2, \R)$ may not converge since this space is open.  One potential fix is to use the Fulton-MacPherson compactification from \refD{Compactification} and replace $\Conf(2, \R)$ by $\Conf[2, \R]$.  This indeed takes care of the convergence issue, and we now have a 0-form  whose value on a knot $K\in\K^3$ is
\begin{equation}\label{E:SelfLinkingForm}
A(K)=\int\limits_{\Conf[2, \R]}\left(\frac{K(x_1)-K(x_2)}{|K(x_1)-K(x_2)|}\right)^* sym_{S^2}=\int\limits_{\Conf[2, \R]}\phi^* sym_{S^2}
\end{equation}
(The reason we are denoting this by $A(K)$ is that it will have something to do with the so-caled ``anomalous correction" in \refS{GeneralFiniteType}.)
\bigskip

Checking if this form is closed comes down to checking that \eqref{E:dStokesClosed} is satisfied  ($sym_{S^2}$ is closed so the first term of \eqref{E:dStokes} goes away).  In other words, we need to check that the restriction of the above integral to the face where two points on $\R$ collide vanishes.  However, there is no reason for this to be true.
\bigskip

More precisely, in the stratum where points $x_1$ and $x_2$ in $\Conf[2, \R]$ collide, which we denote by $\partial_{x_1=x_2}\Conf[2, \R]$, the Gauss map becomes the normalized derivative 
\begin{equation}\label{E:DerivativeMap}
\frac{K'(x_1)}{|K'(x_1)|}.
\end{equation}
Pulling back $sym_{S^2}$ via this map to $\partial_{x_1=x_2}\Conf[2, \R]\times \K^3$ and integrating over $\partial_{x_1=x_2}\Conf[2, \R]$, which is now 1-dimensional, produces a 1-form which is the boundary of $\pi_*\alpha$:
$$
d\pi_*\alpha(K)=\int\limits_{\partial_{x_1=x_2}\Conf[2, \R]}\left(\frac{K'(x_1)}{|K'(x_1)|}\right)^* sym_{S^2}.
$$
Since this integral is not necessarily zero, $\pi_*\alpha$ fails to be invariant.
\bigskip

One resolution to this problem is to look for another term which will cancel the contribution of $d\pi_*\alpha$.  As it turns out, that correction is given by the \emph{framing number} \cite{Mos:Self-Linking}.
\bigskip

The strategy for much of what is to come is precisely what we have just seen: We will set up generalizations of this ``self-linking" integration and then correct them with other terms if they fail to give an invariant.
\begin{rem1}
The integral $\pi_*\alpha$ described here is related to the familiar \emph{writhe}.  One way to say why our integral fails to be an invariant is that the writhe is not an invariant -- it fails on the Type I Reidemeister move. 
\end{rem1}


\subsection{A finite type two knot invariant}\label{S:Casson}


It turns out that the next interesting case generalizing the ``self-linking" integral from \refS{Self-Linking} is that of four points and two directions as pictured in Figure \ref{F:KnotTypeTwo}.  

 \begin{figure}[!htbp]
\begin{center}
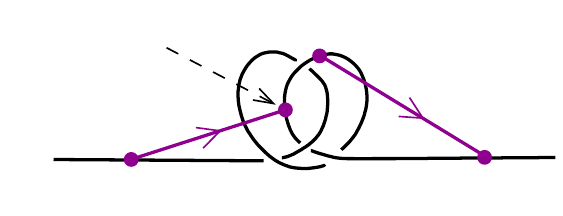
\caption{Toward a generalization of the self-linking number.}
\label{F:KnotTypeTwo}
\end{center}
\end{figure} 

The two maps $\Phi$ now have subscripts to indicate which points are being paired off (the variant where the two maps are $\Phi_{12}$ and $\Phi_{34}$ does not yield anything interesting essentially because of the 1T relation from \refD{ChordDiagramSpace}).  Diagrammatically, the setup can be encoded by the chord diagram $\begin{picture}(0,0)%
\includegraphics{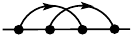}%
\end{picture}%
\setlength{\unitlength}{4144sp}%
\begingroup\makeatletter\ifx\SetFigFont\undefined%
\gdef\SetFigFont#1#2#3#4#5{%
  \reset@font\fontsize{#1}{#2pt}%
  \fontfamily{#3}\fontseries{#4}\fontshape{#5}%
  \selectfont}%
\fi\endgroup%
\begin{picture}(608,156)(980,-6988)
\end{picture}%
$.  This diagram tells us how many points we are evaluating a knot on and which points are being paired off.  This is all the information that is needed to set up the maps
$$
\xymatrix{
 \Conf[4,\R]\times\K^3\ar[rr]^-{\Phi=\Phi_{13}\times\Phi_{24}}\ar[d]
 ^-{\pi}  &   & S^2\times S^2 \\
 \K^3  &  &
}
$$
Let $sym_{S^2}^2$ be the product of two volume forms on $S^2\times S^2$ and let 
$$\alpha=\Phi^*(sym_{S^2}^2).$$
  Since $\alpha$ and $\Conf[4,\R]$, the fiber of $\pi$, are both 4-dimensional, integration along the fiber of $\pi$ thus yields a $0$-form $\pi_*\alpha$ which we will denote by 
$$
(I_{\K^3})_{\begin{picture}(0,0)%
\includegraphics{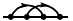}%
\end{picture}%
\setlength{\unitlength}{4144sp}%
\begingroup\makeatletter\ifx\SetFigFont\undefined%
\gdef\SetFigFont#1#2#3#4#5{%
  \reset@font\fontsize{#1}{#2pt}%
  \fontfamily{#3}\fontseries{#4}\fontshape{#5}%
  \selectfont}%
\fi\endgroup%
\begin{picture}(323,89)(980,-6920)
\end{picture}%
}\in\Omega^0(\K^3).
$$
By construction, the value of this form on a knot $K\in\K^3$ is
$$
(I_{\K^3})_{}(K)=
\int\limits_{\pi^{-1}(K)=\Conf[4,\R]} \alpha
$$
The question now is if this form is an element of $\Ho^0(\K^3)$.  This amounts to checking if it is closed.
 Since $sym_{S^2}$ is closed,  this question reduces by \eqref{E:dStokesClosed} to checking whether the restriction of $\pi_*\alpha$ to the codimension one boundary vanishes:
$$
d(I_{\K^3})_{}(K)=
\int\limits_{\partial\Conf[4,\R]} \alpha|_{\partial}\, \stackrel{?}{=}0
$$
There is one such boundary integral for each stratum of $\partial\Conf[4,\R]$, and we want the sum of these integrals to vanish.  We will consider various types of faces:
\begin{itemize}
\item \emph{prinicipal faces}, where exactly two points collide;
\item \emph{hidden faces}, where more than two points collide;
\item the \emph{anomalous face}, the hidden face where all points collide (this face will be important later);
\item \emph{faces at infinity}, where one or more points escape to infinity.
\end{itemize}

The principal and hidden faces of $\Conf[4,\R^3]$ can be  encoded by diagrams in Figure \ref{F:BoundaryDiagrams}, which are obtained from diagram  by contracting segments between points (this mimics collisions).  The loop in the three bottom diagrams corresponds to the derivative map since this is exactly the setup that leads to equation \eqref{E:DerivativeMap}.  In other words, for each loop, the map with which we pull back the volume form is the derivative.

\begin{figure}[!htbp]
\begin{center}
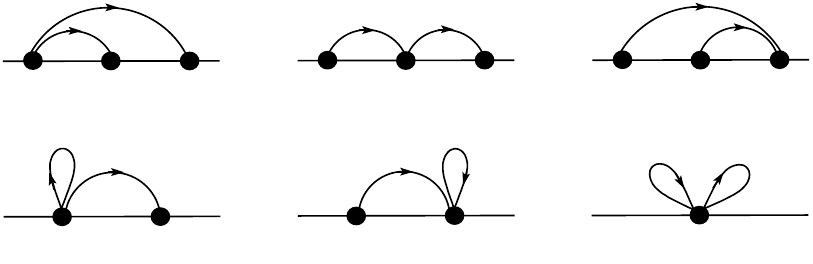
\caption{Diagrams encoding collisions of points.}
\label{F:BoundaryDiagrams}
\end{center}
\end{figure} 

\begin{prop1}\label{P:ThreeStrataVanishChord}
The restrictions of $(I_{\K^3})_{}$ to hidden faces and faces at infinity vanish.
\end{prop1}

\begin{proof}
 Since there are no maps keeping track of directions between various pairs of points on the knot, for example between $K(x_1)$ and $K(x_2)$ or $K(x_2)$ and $K(x_3)$, the blowup along those diagonals did not need to be performed.  As a result, the stratum where $K(x_1)=K(x_2)=K(x_3)$ is in fact codimension two (three points moving on a one-dimensional manifold became one point) and does not contribute to the integral.  The same is true for the stratum where $K(x_2)=K(x_3)=K(x_4)$.  This is an instance of what is sometimes refered to as the \emph{disconnected stratum}.  The details of why integrals over such a stratum vanish are given in \cite[Proposition 4.1]{V:SBT}.  This argument for vanishing does not work for principal faces since, when two points collide, this gives a codimension one face regardless of whether that diagonal was blown up or not.
 
 For the anomalous face, we have the integral
 $$
 \int\limits_{\partial_A\Conf[4,\R]} \left( \frac{K'(x_1)}{|K'(x_1)|}\times \frac{K'(x_1)}{|K'(x_1)|} \right)^* sym_{S^2}^2.
 $$
The map $\Phi_A$  (the extension of $\Phi$ to the anomalous face) then factors as
$$
\xymatrix{
\partial_A\Conf[4,\R]\times \K^3\ar[rr]^-{\Phi_A}\ar[dr] && S^2\times S^2 \\
&  S^2  \ar[ur]&
}
$$
The pullback thus factors through $S^2$ and, since we are pulling back a 4-form to a 2-dimensional manifold, the form must be zero.

Now suppose a point, say $x_1$, goes to infinity.  The map $\Phi_{13}$ is constant on this stratum so that the extension of $\Phi$ to this stratum again factors through  $S^2$.  If more than one point goes to infinity, the factorization is through a point since both maps are constant.
\end{proof}

\begin{rem1}\label{R:DisconnectedStratum}
It is somewhat strange that one of the vanishing arguments in the above proof required us to essentially go back and change the space we integrate over.  Fortunately, in light of Remark \ref{R:PartialBlowups}, this is not such a big imposition.  The reason this happened is that, when constructing the space $\Conf[4,\R]$, we only paid attention to how many points there were on the diagram $$ and not how they were paired off.  The version of the construction where all the diagram information is taken into account from the beginning is necessary for constructing integrals for homotopy links as will be described in \refS{Links}. 
\end{rem1}

There is however no reason for the integrals corresponding to the principal faces (top three diagrams in Figure \ref{F:BoundaryDiagrams}) to vanish.
One way around this is to look for another space to integrate over which has the same three faces and subtract the integrals. This difference will then be an invariant. To find this space, we again turn to diagrams.  The diagram that fits what we need is given in Figure \ref{F:Tripod} since, when we contract edges to get $4\!=\!1$, $4\!=\!2$, and $4\!=\!3$, the result is same three relevant pictures as before (up to relabeling and orientation of edges). 

\begin{figure}[!htbp]
\begin{center}
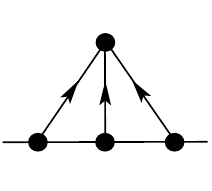
\caption{Diagram whose edge contractions give top three diagrams of Figure \ref{F:BoundaryDiagrams}.}
\label{F:Tripod}
\end{center}
\end{figure} 

The picture suggests that we want a space of four configuration points in $\R^3$, three of which lie on a knot, and we want to keep track of three directions between the points on the knot and the one off the knot (since the diagram has those three edges).
In other words, we want the situation from Figure \ref{F:KnotTripod}.

 \begin{figure}[!htbp]
\begin{center}
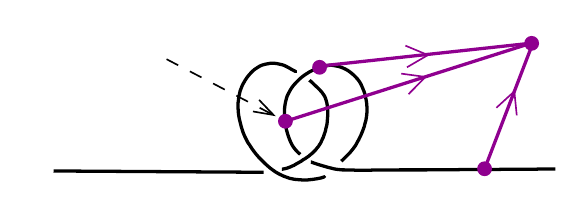
\caption{The situation schematically given by the diagram from Figure \ref{F:Tripod}.}
\label{F:KnotTripod}
\end{center}
\end{figure} 

To make this precise, consider the pullback space

\begin{equation}\label{E:BTPullback}
\xymatrix{
\Conf[3,1;\K^3, \R^3] \ar[r] \ar[d] &  
\Conf[4, \R^3] \ar[d]^-{proj} \\
\Conf[3,\R]\times \K^3 \ar[r]^-{eval}  &  \Conf[3,\R^3]
}
\end{equation}
where $eval$ is the evaluation map and $proj$ the projection onto the first there points of a configuration.
There is now an evident map
$$\pi'\colon \Conf[3,1;\K^3, \R^3]\longrightarrow \K^3$$
whose fiber over $K\in\K^3$ is precisely the configuration space of four points, three of which are constrained to lie on $K$.

\begin{prop1}[\cite{BT}]
 The map $\pi'$ is a smooth bundle whose fiber is a finite-dimensional smooth manifold with corners.
\end{prop1}

This allows us to perform integration along the fiber of $\pi'$.  So let
$$
\Phi=\Phi_{14}\times\Phi_{24}\times\Phi_{34}\colon \Conf[3,1;\K^3, \R^3]\longrightarrow
(S^2)^3
$$
be the map giving the three directions as in Figure \ref{F:KnotTripod}. The relevant maps are thus
$$
\xymatrix{
 \Conf[3,1;\K, \R^3]\ar[r]^-{\Phi}\ar[d]
 ^-{\pi'}   & (S^2)^3 \\
 \K  &  
}
$$
As before, let $\alpha'=\Phi^*(sym_{S^2}^3)$.  This form can be integrated along the fiber $\Conf[3,1; K, \R^3]$  over $K$.  Notice that both the form and the fiber are now 6-dimensional, so integration gives a form
$$
(I_{\K^3})_{\begin{picture}(0,0)%
\includegraphics{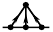}%
\end{picture}%
\setlength{\unitlength}{4144sp}%
\begingroup\makeatletter\ifx\SetFigFont\undefined%
\gdef\SetFigFont#1#2#3#4#5{%
  \reset@font\fontsize{#1}{#2pt}%
  \fontfamily{#3}\fontseries{#4}\fontshape{#5}%
  \selectfont}%
\fi\endgroup%
\begin{picture}(238,138)(980,-6729)
\end{picture}%
}\in\Omega^0(\K^3).
$$
The value of this form on $K\in\K^3$ is thus 
$$
(I_{\K^3})_{}(K)=
\int\limits_{(\pi')^{-1}(K)=\Conf[3,1;,K,\R^3]} \alpha'
$$

We now have the analog of \refP{ThreeStrataVanishChord}:

\begin{prop1}\label{P:ThreeStrataVanishTripod}
The restrictions of $(I_{\K^3})_{}$ to the hidden faces and faces at infinity vanish.  The same is true for the two principal faces given by the collisions of two points on the knot.
\end{prop1} 

\begin{proof}
The same arguments as in \refP{ThreeStrataVanishChord} work here, although an alternative is possible:  For any of the hidden faces or the two principal faces from the statement of the proposition, at least two of the maps will be the same.  For example, if $K(x_1)=K(x_2)$, then $\Phi_{14}=\Phi_{24}$ and $\Phi$ hence factors through a space of strictly lower dimension than the dimension of the form.  

A little more care is needed for faces at infinity.  If some, but not all points go to infinity (this includes $x_4$ going to infinity in any direction), the same argument as in \refP{ThreeStrataVanishChord} holds.  If all points go to infinity, then it can be argued that, yet again, the map $\Phi$ factors through a space of lower dimension, but this has to be done a little more carefully; see proof of \cite[Proposition 4.31]{KMV:FTHoLinks} for details. 
\end{proof}

We then have
 
\begin{thm1}\label{T:Casson}
The map 
\begin{align*}
\K & \longrightarrow \R \\
K & \longmapsto \Big((I_{\K^3})_{}(K)-
(I_{\K^3})_{}(K)\Big)
\end{align*}
is a knot invariant, i.e.~an element of $\Ho^0(\K^3)$.  Further, it is a finite type two invariant.
\end{thm1}

This theorem was proved in \cite{GMM} and in \cite{BN:Thesis}.  Since the set up there was for closed knots, the diagrams were based on circles, not segments, and one hence had to include some factors in the above formula having to do with the automorphisms of those diagrams.  We will encounter  automorphism factors like these in \refT{AllFiniteType}.
\bigskip

Recall the discussion of finite type invariants from \refS{FiniteType}.  The invariant from \refT{Casson} turns out to be the unique  finite type two invariant which takes the value of zero on the unknot and one on the trefoil.  It is also equal to the coefficient of the quadratic term of the Conway polynomial, and it is equal to the Arf invariant when reduced mod 2.  In addition, it appears in the surgery formula for the Casson invariant of homology spheres and is thus also known as the Casson \emph{knot} invariant.   A treatment of this invariant from the Casson point of view can be found in \cite{PV:Casson}.

\begin{proof}[Proof of \refT{Casson}]
In light of Propositions \ref{P:ThreeStrataVanishChord} and \ref{P:ThreeStrataVanishTripod}, the only thing to show is that the integrals along the principal faces match.  This is clear essentially from the pictures (since the diagram pictures representing those collisions are the same), except that the collisions of $K(x_i)$, $i=1,2,3$, with $x_4$ produce an extra map on each face (extension of $\Phi_{i4}$ to that face) that is not present in the first integral.  Since $K(x_4)$ can approach $K(x_i)$ from any direction, the extension of $\Phi_{i4}$ sweeps out a sphere, and this is independent of the other two maps.  By Fubini's Theorem, we then have for, say, the case $i=1$,
\begin{align*}
&     \int\limits_{\partial_{K(x_1)=x_4}\Conf[3,1;K,\R^3]}( \Phi_{14}\times\Phi_{24}\times\Phi_{34})^* sym_{S^2}^3 \\
= &   \int\limits_{S^2}(\Phi_{14})^*sym_{S^2}
\cdot
\int\limits_{\partial_{K(x_1)=x_4}\Conf[3,1;K,\R^3]}(\Phi_{24}\times\Phi_{34} )^* sym_{S^2}^2 \\
= & \int\limits_{\Conf[3,\R]} (\Phi_{12}\times\Phi_{13})^* sym_{S^2}^2
\end{align*}
The last line is obtained by observing that the first integral in the previous line is 1 (since $sym_{S^2}$ is a unit volume form) and by rewriting the domain $\partial_{K(x_1)=x_4}\Conf[3,1;K,\R^3]$ as $\Conf[3,\R]$ (and relabeling the points).  The result is precisely the integral of one of the principal faces of $(I_{\K^3})_{}$.  The remaining two principal faces can similarly be matched up.  Some care should be taken with signs, and we leave it to the reader to check those, keeping in mind that relabeling the vertices of a diagram may introduce a sign in the integral (this corresponds to permuting the copies of $\R$ and $\R^3$ and, since the dimensions of these spaces are odd, this in turn preserves or reverses the orientation of the fiber depending on the sign of the permutation).  Changing orientations of chords of edges also might affect the sign (this corresponds to composing with the antipodal map to $S^2$ which changes the sign of the pullback form).
\bigskip

To show that this is a finite type two invariant is not difficult.  The key is that the resolutions of the three singularities can be chosen as small as desired.  Then the integration domain can be broken up into subsets on which the difference of the integrals between the two resolutions is zero.  Details can be found in \cite[Section 5]{V:SBT}.
\end{proof}


\subsection{Finite type $k$ knot invariants}\label{S:GeneralFiniteType}


Recall the space of trivalent diagrams from \refD{TrivalentDiagramSpace}.
The two diagrams appearing in \refT{Casson} are the two (up to decorations) elements of $\mathcal{TD}_2$.  However, the recipe for integration that these diagrams gave us in the previous section generalizes to any diagram. Namely, any diagram $\Gamma\in\mathcal{TD}_k$ with $p$ segement vertices and $q$ free vertices gives a prescription for constructing a pullback as in \eqref{E:BTPullback}:

\begin{equation}\label{E:pqpullback}
\xymatrix{
\Conf[p,q;\K^3, \R^3] \ar[r] \ar[d] &  
\Conf[p+q, \R^3] \ar[d]^-{proj} \\
\Conf[p,\R]\times \K^3 \ar[r]^-{eval}  &  \Conf[p,\R^3]
}
\end{equation}
There is again a bundle
$$
\pi\colon \Conf[p,q;\K^3, \R^3]\longrightarrow \K^3
$$
whose fibers are manifolds with corners.
We also have a map
$$
\Phi\colon \Conf[p,q;\K^3, \R^3]\longrightarrow (S^2)^e
$$
where 
\begin{itemize}
\item $\Phi$ is the product of the Gauss maps between pairs of configuration points corresponding to the edges of $\Gamma$, and 
\item $e$ is the number of chords and edges of $\Gamma$.
\end{itemize}

Let $\alpha=\Phi^*(sym_{S^2}^e)$.  It is not hard to see that, because of the trivalence condition on $\mathcal{TD}_k$, the dimension of the fiber of $\pi$ is $2e$, as is the dimension of $\alpha$.  Thus we get a 0-form $\pi_*\alpha$, or, in the notation of \refS{Casson}, a form
$$
(I_{\K^3})_{\Gamma}\in\Omega^0(\K^3),
$$
whose value on $K\in\K^3$ is 
$$
(I_{\K^3})_{\Gamma}(K)=
\int\limits_{\pi^{-1}(K)=\Conf[p,q; K, \R^3]} \alpha.
$$

Now recall from discussion preceeding \eqref{E:SameWeightSystems} that $\mathcal{TW}_k$ is the space of weight systems of degree $k$, i.e.~functionals on $\mathcal{TD}_k$.  Also recall the ``self-linking" integral from equation \eqref{E:SelfLinkingForm}.  Finally let $\mathcal{TB}_k$ be a basis of diagrams for $\mathcal{TD}_k$ (this is finite and canonical up to sign) and let $|\operatorname{Aut}(\Gamma)|$ be the number of automorphisms of $\Gamma$ (these are automorphisms that fix the segment, regarded up to labels and edge orientations).  

\begin{thm1}[\cite{Thurs}]\label{T:AllFiniteType}
For each $W\in\mathcal{TW}_k$, the map 
\begin{align}
 \K^3 & \longrightarrow \R \label{E:ThurstonMap}\\
K & \longmapsto \sum_{\Gamma\in\mathcal{TB}_k}
\left(
\frac{W(\Gamma)}{|\operatorname{Aut}(\Gamma)|}(I_{\K^3})_{\Gamma}
-\mu_{\Gamma}A(K)
\right), \notag
\end{align}
where $\mu_{\Gamma}$ is a real number that only depends on $\Gamma$, is a finite type $k$ invariant.  Furthermore, the assignment  $W\mapsto V\in \calV_k$ gives an isomorphism
$$
I_{\K^3}^0\colon\mathcal{TW}_k\longrightarrow \mathcal{V}_k/\mathcal{V}_{k-1}
$$
(where the map \eqref{E:ThurstonMap} is followed by the quotient map $\mathcal{V}_k\to\mathcal{V}_k/\mathcal{V}_{k-1}$).
\end{thm1}

\begin{proof} The argument here is essentially the same as in \refT{Casson} but is complicated by the increased number of cases.  Once again, to start, one should observe that the relations in \refD{TrivalentDiagramSpace} are compatible with the sign changes that occur in the integral if copies of $\R$ or $\R^3$ in the bundle $\Conf[p,q;\K^3,\R^3]$ are switched (the orientation of this space would potentially change and so would the sign of the integral) or if a Gauss map is replaced by its antipode.
\bigskip

To prove that the integrals along hidden faces vanish, one considers various types of faces as in Propositions \ref{P:ThreeStrataVanishChord} and \ref{P:ThreeStrataVanishTripod}.  If the points that are colliding form a ``disconnected stratum" in the sense that the set of vertices and edges of the corresponding part of $\Gamma$ forms two subsets such that no chord of edge connects a vertex of one subset to a vertex of the other, we revise the definition of $\Conf[p,q;\K^3,\R^3]$ and turn this stratum into one of codimension greater than one.  This takes care of most hidden faces \cite[Section 4.2]{V:SBT}.  The remaining ones are disposed of by symmetry arguments due to Kontsevich \cite{K:Vass} (see also \cite[Section 4.3]{V:SBT}) which show that some of the integrals are equal to their negatives and thus vanish.  Another reference for the vanishing along hidden faces is \cite[Theorem A.6]{CCRL} (the authors of that paper consider closed knots but this does not change the arguments).

The vanishing of the integrals along faces at infinity goes exactly the same way as in \refP{ThreeStrataVanishChord}; the map $\Conf[p,q;\K^3,\R^3]\to (S^2)^e$ always factors through a space of lower dimension.  More details can be found in \cite[Section 4.5]{V:SBT}. 

Lastly, the vanishing of principal faces occurs due to the STU and IHX relations.  The STU case is provided in Figure \ref{Fig:STUCancelation} (we have omitted the labels on diagrams and signs to simplify the picture).

\begin{figure}[!htbp]
  \scalebox{0.85}{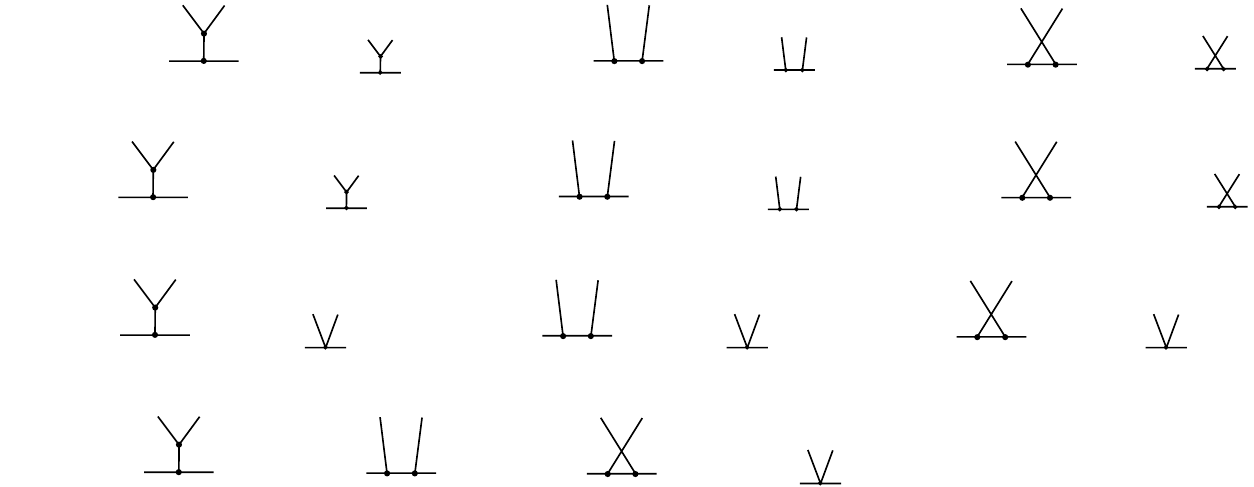}
\caption{Cancellation due to the STU relation}
\label{Fig:STUCancelation}
\end{figure}

Similar cancellation occurs with principal faces resulting from collision of free vertices, where one now uses the IHX relation.  The contributions from all principal faces thus cancel.
\bigskip

The one boundary integral that is not know to vanish (but is conjectured to; it is known that it does in a few low degree cases) is that of the anomalous face corresponding to all points colliding.  It turns out that this integral is some multiple $\mu_{\Gamma}$ of the self-linking integral $A(K)$, and hence the term $\mu_{\Gamma}A(K)$ is subtracted so that we get an invariant.  For further details, see \cite[Section 4.6]{V:SBT}.
\bigskip

To show that this invariant is finite type $k$ and that we get an isomorphism $\mathcal{TW}_k\to\mathcal{V}_k/\mathcal{V}_{k-1}$ is tedious but straightforward.  The point is that, as in the proof of \refT{Casson}, the resolutions of the $k+1$ singularities can be chosen to differ in arbitratily small neighborhoods and so the integrals from the sum of \eqref{E:ThurstonMap} cancel out.  For this to happen, the domain of integration is  subdivided and the integrals end up pairing off and canceling on appropriate neighborhoods.  For details on how this leads to the conclusion that the invariant is finite type, see \cite[Lemma 5.4]{V:SBT}.  Finally, it is easy to show that the map $I_{\K^3}^0$, when composed with the isomorphism \eqref{E:SameWeightSystems}, is an inverse to the map $f$ from equation \eqref{E:EasyDirection}, and this gives the desired isomorphism \cite[Theorem 5.3]{V:SBT}.
\end{proof}

\begin{rem1}\label{R:BTProvesKontsevich}
In combination with \eqref{E:SameWeightSystems}, \refT{AllFiniteType} thus gives an alternative proof of Kontsevich's \refT{Kontsevich}.  
\end{rem1}

\begin{rem1}
The reason we put a superscript ``0" on the map $I_{\K^3}^0$ is that this is really just the degree zero manifestation of a chain map described in \refS{HigherKnots}.
\end{rem1}

\begin{rem1}
In \refT{Casson}, there is one weight system for the degree two case and it takes one the values $1$ and $-1$ for the two relevant diagrams.  In addition, the anomalous correction in that case vanishes, so \refT{AllFiniteType} is indeed a generalization of \refT{Casson}.
\end{rem1}

\refT{AllFiniteType} thus gives a way to construct all finite type invariants.  In addition, configuration space integrals provide an important link between Chern-Simons Theory (where the first instances of such integrals occur), topology, and combinatorics.  Unfortunately, computations with these integrals are difficult and only a handful have been performed.


\section{Generalization to $\K^n$, $n>3$}\label{S:HigherKnots}


Just as there was no reason to stop at diagrams with four vertices, there is no reason to stop at trivalent diagrams.  One might as well take diagrams that are at least trivalent, such as the one from Figure \ref{F:GeneralKnotDiagram} (less than trivalent turns out not to give anything useful).

\begin{figure}[!htbp]
\begin{center}
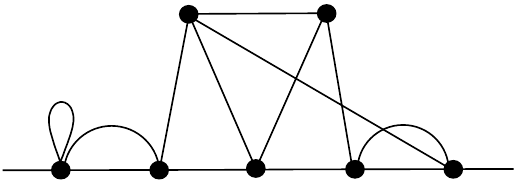
\caption{A more general diagram (without decorations).}
\label{F:GeneralKnotDiagram}
\end{center}
\end{figure} 

To make this precise, we generalize \refD{TrivalentDiagram}  as follows:

\begin{defi}\label{D:Diagrams}  For $n\geq 3$, 
define a \emph{diagram} to be a connected graph consisting of an oriented line segment (considered up to orientation-preserving diffeomorphism) and some number of vertices of two types: \emph{segment vertices}, lying on the segment, and \emph{free vertices}, lying off the segment.  The graph also contains some number of 
\begin{itemize}
\item \emph{chords} connecting distinct segment vertices;
\item  \emph{loops} connecting segment vertices to themselves; and  
\item \emph{edges} connecting two free vertices or a free vertex and a segment vertex.
\end{itemize}  Each vertex is at least trivalent, with the segment adding two to the count of the valence of a segment vertex.  In addition,
\begin{itemize}
\item if $n$ is odd, all vertices are labeled, and edges, loops, and chords are oriented;
\item if $n$ is even, external vertices, edges, loops, and chords are labeled.
\end{itemize}
\end{defi}

We also identify \emph{arcs}, which are parts of the segment between successive segment vertices.

\begin{defi}\label{D:DiagramSpace}
For each $n\geq 3$, let $\mathcal{D}^n$ be the real vector space generated by diagrams from \refD{Diagrams} 
 modulo  the relations
\begin{enumerate}
\item If $\Gamma$ contains more than one edge connecting two vertices, then $\Gamma=0$;
\item  If $n$ is odd and if a diagram $\Gamma$ differs from $\Gamma'$ by a relabeling of vertices or orientations of loops, chord, and edges, then 
$$
\Gamma-(-1)^{\sigma}\Gamma'=0
$$
where $\sigma$ is the sum of the sign of the permutation of vertex labels and the number or loops, chords, and edges with different orientation.
\item  If $n$ is even and if a diagram $\Gamma$ differs from $\Gamma'$ by a relabeling of segment vertices or loops, chord, and edges, then 
$$\Gamma=(-1)^\sigma\Gamma',
$$
where $\sigma$ is the sum of the signs of these permutations.
\end{enumerate}
\end{defi}

Note that $\mathcal{TD}_k$ is the quotient of the subspace of $\calD^n$ generated by trivalent diagrams with $2k$ vertices.  

\begin{defi}\label{D:Degree}
Define the \emph{degree} of $\Gamma\in\mathcal{D}^n$ to be
$$
\deg(\Gamma)=2(\#\ \text{edges})-3(\#\ \text{free vertices})-(\#\ \text{segment vertices}).
$$
\end{defi}

Thus if $\Gamma$ is a trivalent diagram,  $\deg(\Gamma)=0$.
\bigskip

Coboundary $\delta$ is given on each diagram by contracting edges and segments (but not chords or loops since this does not represent a collision of points).  Namely, let $e$ be an edge or an arc of $\Gamma$ and let $\Gamma/e$ be $\Gamma$ with $e$ contracted.  Then 
$$
\delta(\Gamma)=\sum_{\text{edges and arcs $e$ of $\Gamma$}}\epsilon(\Gamma)\Gamma/e,
$$ 
where $\epsilon(\Gamma)$ is a sign determined by
\begin{itemize}
\item if $n$ is odd and $e$ connects vertex $i$ to vertex $j$, $\epsilon(\Gamma)=(-1)^j$ if $j>i$ and $\epsilon(\Gamma)=(-1)^{i+1}$ if $j<i$;
\item if $n$ is even and $e$ is an arc connecting vertex $i$ to vertex $j$, then $\epsilon(\Gamma)$ is computed as above, and if $e$ is an edge, then $\epsilon(\Gamma)=(-1)^s$, where $s=\text{(label of $e$)}+(\#\ \text{segment vertices})+1$.
\end{itemize}

On $\calD^n$, $\delta$ is the linear extension of this.
An example (without decorations and hence modulo signs) is given in Figure \ref{F:BoundaryDiagramsAgain}.

\begin{figure}[!htbp]
\begin{center}
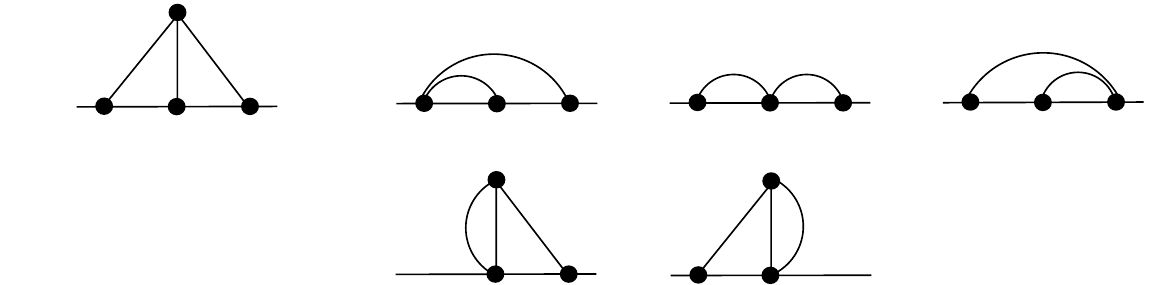
\caption{An example of a coboundary.}
\label{F:BoundaryDiagramsAgain}
\end{center}
\end{figure} 

\begin{thm}[\cite{CCRL}, Theorem 4.2]\label{T:Coboundary}
For $n\geq 3$,  $
(\mathcal{D}^n, \delta)
$
is a cochain complex.
\end{thm}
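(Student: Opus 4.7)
The plan is to verify three things: that $\delta$ descends to a well-defined operator on the quotient space $\calD^n$, that it raises the degree of \refD{Degree} by exactly one, and that $\delta^2=0$.

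Well-definedness on the quotient is largely bookkeeping. Any diagram appearing in $\delta(\Gamma)$ that contains a multi-edge is already zero in $\calD^n$, so the multi-edge relation is preserved automatically. The more delicate point is compatibility of the sign $\epsilon(\Gamma)$ with the vertex-relabeling and orientation-reversal relations (items (2) and (3) of \refD{DiagramSpace}). For odd $n$, one checks that transposing two vertex labels multiplies $\epsilon(\Gamma)$ and the induced order on $\Gamma/e$ by matching signs, and that reversing an edge, loop, or chord orientation is invisible to $\delta$ because the contracted edge disappears while surviving decorations are untouched. The even case runs similarly with the shorter list of decorations to track.

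For the degree shift, write $\deg(\Gamma)=2E-3F-S$ where $E,F,S$ are the numbers of edges, free vertices, and segment vertices. Contracting an edge between two free vertices fuses them into a single free vertex, so $E$ and $F$ each drop by $1$ and $\deg$ changes by $-2+3=+1$. Contracting an edge between a free and a segment vertex absorbs the free vertex into the segment vertex, again dropping $E$ and $F$ by $1$ and changing $\deg$ by $+1$. Contracting an arc fuses two segment vertices into one, dropping $S$ by $1$ and changing $\deg$ by $+1$. Since vertex valences only increase under contraction, the contracted diagram is still at least trivalent, so it remains a legal diagram.

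The main obstacle is $\delta^2=0$, which is the standard graph-cohomology cancellation argument. I expand $\delta^2(\Gamma)$ as a sum over ordered pairs $(e,f)$ of contractible items (edges or arcs) and pair each term $(e,f)$ with $(f,e)$, aiming to show that the two contractions yield the same underlying diagram with opposite signs. When $e$ and $f$ are disjoint, $\Gamma/e/f=\Gamma/f/e$ as labeled graphs, and the claim reduces to the sign identity
$$
\epsilon(\Gamma,e)\,\epsilon(\Gamma/e,f) = -\,\epsilon(\Gamma,f)\,\epsilon(\Gamma/f,e),
$$
which is a case analysis based on the positions of the endpoints of $e$ and $f$ in the label order and on how that order shifts after the first contraction. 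When $e$ and $f$ share a common vertex, both orderings collapse the same three-vertex sub-configuration to a point, so again $\Gamma/e/f=\Gamma/f/e$ as decorated graphs, and the signs must again cancel. The verification is tedious but purely combinatorial, running exactly parallel to the computation in the original source \cite[Theorem 4.2]{CCRL}; my write-up would amount to a careful sign bookkeeping that mirrors theirs.
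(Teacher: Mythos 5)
Your proposal is correct and takes the same route the paper does: the paper's entire proof is the one-line assertion that ``it is a straightforward combinatorial exercise to see that $\delta$ raises degree by 1 and that $\delta^2=0$,'' and your degree count and the pairing of $(e,f)$ with $(f,e)$ in $\delta^2(\Gamma)$ are exactly the standard details being left to the reader. Your write-up simply makes explicit what the paper (and \cite{CCRL}) treat as routine bookkeeping.
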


\begin{proof}
It is a straightforward combinatorial exercise to see that $\delta$ raises degree by 1 and that $\delta^2=0$. 
\end{proof}

Returning to configuration space integrals, for each $\Gamma\in\mathcal D^n$ and $K\in\K^n$, we can still define an integral  as before.   Vertices of $\Gamma$ tell us what pullback bundle $\Conf[p,q;\K^n, \R^n]$ to construct, i.e.~how many points to have on and off the knot.  The only difference is that the map 
$$
\Phi\colon \Conf[p,q;\K^n, \R^n]\longrightarrow (S^{n-1})^{(\#\ \text{loops, chords, and edges of $\Gamma$})}
$$ 
 is now a product of Gauss maps for each edge and chord of $\Gamma$ as well as the derivative map for each loop of $\Gamma$.  Further,  none of what was done in \refS{GeneralFiniteType} requires $n=3$.  More precisely, we still get a form, for $n\geq 3$,
$$
(I_{\K^n})_{\Gamma}\in \Omega^d(\K^n),
$$
given by integration along the fiber of the bundle
$$
\pi\colon \Conf[p,q;\K^n, \R^n]\longrightarrow \K^n.
$$
The degree of the form is no longer necessarily zero but of degree equal to
\begin{align*}
& (n-1)(\#\ \text{loops, chords, and edges of $\Gamma$})\\
 -& n(\text{\#\ free vertices of $\Gamma$})\\
 - & (\text{\#\ segment vertices of $\Gamma$})
\end{align*}
and its value on $K\in\K^n$ is as before 
$$
(I_{\K^n})_{\Gamma}(K)=
\int\limits_{\pi^{-1}(K)=\Conf[p,q; K, \R^n]} \alpha.
$$ 

\begin{thm}[\cite{CCRL}, Theorem 4.4]\label{T:Italians}
For $n>3$, configuration space integrals give a cochain map
$$
I_{\K^n}\colon (\mathcal{D}^n, \delta) \longrightarrow (\Omega^*(\K^n), d).
$$
\end{thm}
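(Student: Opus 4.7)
The plan is to verify $d(I_{\K^n})_\Gamma = (I_{\K^n})_{\delta\Gamma}$ on each generator $\Gamma \in \calD^n$ and extend linearly. Since each factor $sym_{S^{n-1}}$ is closed, so is $\alpha = \Phi^*(sym_{S^{n-1}}^e)$, and Stokes' theorem in the form \eqref{E:dStokesClosed} gives
$$d(I_{\K^n})_\Gamma = (\partial\pi)_*\alpha = \sum_F \int_F \alpha|_F,$$
where the sum is over codimension one strata $F$ of the fiber $\Conf[p,q;K,\R^n]$. Following the stratification from \refS{Compactification}, each such $F$ is indexed by a subset $S$ of configuration points that collide simultaneously, and I would group the strata into principal faces ($|S|=2$), hidden faces ($2<|S|<p+q$), the anomalous face ($|S|=p+q$), and faces at infinity.

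For each arc or edge $e$ of $\Gamma$, the collision of its two endpoints yields a principal face whose geometry is precisely the bundle for the contracted diagram $\Gamma/e$, fibered over a tangent $(n-1)$-sphere. Integration over this sphere absorbs the Gauss or derivative factor corresponding to $e$ and produces $\pm(I_{\K^n})_{\Gamma/e}$; careful sign bookkeeping --- tracking orientation changes induced by permuting the factors of $\R$ and $\R^n$, by reversing orientations of chords and edges, and by the ``arc'' contractions coming from segment vertices --- is intended to match the sign $\epsilon(\Gamma)$ from \refD{Degree}. Principal faces where the colliding pair is joined by a chord or a loop do not contribute, since those diagonals need not have been blown up in the first place (cf.\ Remark \ref{R:PartialBlowups}), which explains why $\delta$ sums only over edges and arcs. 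Principal faces where the two colliding points are not joined by any edge of $\Gamma$ give disconnected strata in the sense of Remark \ref{R:DisconnectedStratum}, and their integrals vanish by the argument in \refP{ThreeStrataVanishChord}. Summing over contractible $e$ then produces exactly $(I_{\K^n})_{\delta\Gamma}$.

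For hidden faces and the anomalous face, the plan is to show that the restriction $\Phi|_F$ factors through a subspace of $(S^{n-1})^e$ of dimension strictly less than $e(n-1)$, so that the pullback of $sym_{S^{n-1}}^e$ already vanishes on $F$ for degree reasons. Concretely, when $|S|\geq 3$ points collide and the induced sub-diagram is not "fully edge-connected'' inside $S$, several Gauss components of $\Phi|_F$ coincide or become constant, forcing factorization; the remaining hidden faces are handled by the involution/symmetry argument of Kontsevich invoked in the proof of \refT{AllFiniteType}. The anomalous face, where the whole configuration concentrates to a single point, leaves a residual blowup of dimension $n(p+q)-n-1$, and $\Phi|_F$ factors through the quotient by overall translations and scalings; the dimension count that produced a potentially nonzero contribution (and hence the correction $\mu_\Gamma A(K)$) in the $n=3$ setting of \refT{AllFiniteType} now has $e(n-1)$ exceeding the target dimension once $n>3$, so the pullback form is identically zero. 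Faces at infinity are disposed of as in \refP{ThreeStrataVanishChord}: escaping points make the corresponding Gauss maps constant, forcing $\Phi|_F$ to factor through a lower-dimensional sphere.

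The main obstacle is the treatment of hidden faces whose factorization is not immediate --- in particular faces supported on "connected'' sub-collisions --- where one must either reduce to Kontsevich's symmetry/involution argument or, following Remark \ref{R:PartialBlowups}, refine the partial-blowup construction so that such collisions become strata of codimension at least two. A secondary subtlety is the sign $\epsilon(\Gamma)$: verifying that the orientations inherited from the bundle $\pi$, from the product of Gauss and derivative factors, and from the permutations of vertex and edge labels combine to reproduce the combinatorial sign in \refD{DiagramSpace} requires separate bookkeeping for $n$ odd and $n$ even.
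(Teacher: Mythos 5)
Your overall strategy---Stokes' theorem in the form \eqref{E:dStokesClosed}, classification of the codimension-one faces into principal, hidden, anomalous, and at-infinity, with principal faces reproducing $\delta$ and the rest vanishing by factorization and Kontsevich's symmetry argument---is exactly the route the paper takes (its own proof is a sketch deferring these steps to \refT{AllFiniteType} and to \cite{CCRL}). However, your treatment of the anomalous face, which is precisely the face responsible for the restriction $n>3$, contains a concrete error. You assert that $\Phi|_F$ factors through a quotient of dimension $n(p+q)-n-1$ and that $e(n-1)$ exceeds this once $n>3$. That inequality is false in general: for the tripod diagram ($p=3$, $q=1$, $e=3$) one has $e(n-1)=3n-3$ while $n(p+q)-n-1=3n-1$, so the form degree does \emph{not} exceed your stated target dimension and the argument does not close. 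The count you need uses the fact that in the limit the $p$ segment vertices are constrained to lie on the tangent line to the knot at the collision point: the space of limiting data (unit tangent vector, $p$ points on that line, $q$ points in $\R^n$, modulo translation along the line and scaling) has dimension $(n-1)+p+nq-2$, and the valence condition $2e\geq p+3q$ then gives $(n-1)e-(p+nq)\geq (n-3)(p+q)/2$, which strictly exceeds $n-3$ whenever $p+q>2$ and $n>3$. This is exactly the computation that fails at $n=3$ (equality) and produces the anomaly there, so getting it right is the crux of why \refT{Italians} holds only for $n>3$.

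A secondary point: your claim that principal faces whose colliding pair is joined by a chord or a loop ``do not contribute because those diagonals need not have been blown up'' is not correct as stated. A loop has no associated pair of colliding points at all, and two \emph{adjacent} segment vertices joined by a chord do collide in a genuine codimension-one face, since all diagonals of $\Conf[p,\R]$ are blown up; that face is the one indexed by the arc between them, and in the limit the chord's Gauss map becomes the derivative map, i.e.\ the loop of the contracted diagram $\Gamma/e$. So these faces are accounted for by the arc contractions already present in $\delta$ rather than being absent, and the reason $\delta$ does not separately contract chords or loops is that doing so would not correspond to any additional collision.
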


(Here $d$ is the ordinary deRham differential.)

\begin{proof}
To prove this, one first observes that, just as in \refT{AllFiniteType}, the relations in $\mathcal D^n$ correspond precisely to what happens on the integration side if points or maps are permuted or if Gauss maps are composed with the antipodal map.  In fact, those relations in $\mathcal D^n$ are defined precisely because of what happens on the integration side.

Once this is established, it is necessary to show that, for each $\Gamma$, the integrals along the hidden faces and faces at infinity vanish, and this goes exactly the same way as in \refT{AllFiniteType}. The integrals along principal faces correspond precisely to contractions of edges and arcs, so that the map commutes with the differential.
\end{proof}

 \begin{rem}
 There is an algebra structure on $\calD^n$ given by the \emph{shuffle product} that is compatible with the wedge product of forms \cite{CCRL:Struct}.  This makes  $I_{\K^n}$ a map of algebras as well.
 \end{rem}
 
By evaluating  $I_{\K^n}$ on certain diagrams, Cattaneo, Cotta-Ramusino, and Longoni \cite{CCRL} also prove
 
\begin{cor}
Given any $i>0$, the knot space $\K^n$, $n>3$, has nontrivial cohomology in dimension greater than $i$.
\end{cor}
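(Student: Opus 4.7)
The plan is to leverage Theorem \ref{T:Italians}, which supplies a cochain map $I_{\K^n}\colon(\mathcal{D}^n,\delta)\to(\Omega^*(\K^n),d)$. Since this map is degree-preserving (with the form degree governed by the counts of edges and vertices, which grow with the size of the diagram), it suffices to exhibit, for every $i>0$, a cocycle $\Gamma\in\mathcal{D}^n$ such that $I_{\K^n}(\Gamma)$ has form-degree greater than $i$ and represents a nonzero class in $\Ho^*(\K^n)$.

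First I would produce arbitrarily high-degree cocycles in $\mathcal{D}^n$. Natural candidates are graphs built by stacking or wedging together small ``building block'' diagrams (for instance, wheels, or chord diagrams decorated with trees of free vertices) whose form-degree scales linearly with the number of blocks. Showing that such a $\Gamma$ satisfies $\delta\Gamma=0$ is a combinatorial check: the coboundary is defined by contracting an edge or arc, and for each contraction one can pair it with a sign-cancelling contraction elsewhere by exploiting the symmetries of the chosen building block. A convenient choice is to take iterated $n$-dependent analogs of the tripod diagram from Figure \ref{F:Tripod}, where the STU-type relations used in the proof of \refT{AllFiniteType} force $\delta$ to cancel in pairs.

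Next I would detect nontriviality of $I_{\K^n}(\Gamma)$ in cohomology by constructing an explicit cycle $C_\Gamma\subset\K^n$ of matching dimension and showing the pairing
\[
\int_{C_\Gamma} I_{\K^n}(\Gamma)\;\neq\;0.
\]
The cycle is built in a manner ``dual'' to $\Gamma$: one starts with a long immersion whose transverse self-intersections and prescribed evaluation points mirror the chord/edge pattern of $\Gamma$, and then resolves each singularity along directions parameterized by spheres $S^{n-2}$ (available since $n>3$). Taking the product of all such resolution parameters yields a family of genuine embeddings, hence a cycle $C_\Gamma$ in $\K^n$. By Fubini, the pairing localizes near the singular base point; on that neighborhood the integrand $I_{\K^n}(\Gamma)$ factors as a product of pulled-back unit volume forms on copies of $S^{n-1}$, and the integral reduces to the degree of a map between products of spheres. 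With the resolution chosen appropriately this degree is $\pm 1$, so the pairing is nonzero.

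The main obstacle is the last step: one must confirm that contributions away from the localized region vanish, and that no lower-dimensional strata spoil the degree computation. The vanishing arguments are exactly of the type already used in the proofs of Propositions \ref{P:ThreeStrataVanishChord} and \ref{P:ThreeStrataVanishTripod} and in \refT{AllFiniteType}: hidden and infinite faces factor through spaces of strictly lower dimension than the form, and disconnected strata have excess codimension after appropriate partial blowups (\refE{PartialBlowups}). Once these vanishings are in place, the nonzero degree computation produces a nontrivial class $[I_{\K^n}(\Gamma)]\in\Ho^{>i}(\K^n)$, completing the proof.
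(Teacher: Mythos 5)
The paper does not actually prove this corollary; it quotes it from \cite{CCRL} with only the hint that one ``evaluates $I_{\K^n}$ on certain diagrams,'' so your proposal has to be judged against that cited argument --- and it does reconstruct its two essential halves: (i) feed cocycles of $\mathcal{D}^n$ into the cochain map of \refT{Italians} to obtain closed forms of arbitrarily large degree, and (ii) certify nontriviality by integrating over cycles obtained by resolving singular knots, with the vanishing arguments of \refP{ThreeStrataVanishChord} and \refP{ThreeStrataVanishTripod} disposing of the non-localized contributions. That is the right strategy.

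Two points need repair or sharpening. First, the resolution spheres: near a transverse double point with tangent directions $v_1,v_2$, the displacements that separate the two strands are exactly those outside $\operatorname{span}(v_1,v_2)$, a space homotopy equivalent to $S^{n-3}$, not $S^{n-2}$. This is also forced by your own requirement that the cycle have matching dimension: a cocycle supported on trivalent diagrams with $k$ chords/edges and $2k$ segment vertices produces, by the degree formula of \refS{HigherKnots}, a form of degree $(n-1)k-2k=(n-3)k$, which pairs correctly with the $k(n-3)$-dimensional product of resolution spheres $(S^{n-3})^k$; with $S^{n-2}$ the pairing would vanish for dimension reasons. Second, you do not need to manufacture high-degree cocycles by stacking wheels or tripods: for every $k$ the space of weight systems $\mathcal{TW}_k$ is nonzero (e.g.\ the Conway coefficients), and the corresponding linear combinations of trivalent diagrams of degree $k$ are precisely degree-zero cocycles of $\mathcal{D}^n$ whose images have form degree $(n-3)k\geq k$, already exceeding any prescribed $i$. (An alternative route, which the survey attributes to the follow-up paper \cite{CCRL:Struct}, is to use that $I_{\K^n}$ is a map of algebras for the shuffle product and exhibit nontrivial products of low-degree classes; your construction sidesteps that extra structure at the cost of the explicit pairing computation.)
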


Complex $\calD^n$ is known to have the same homology as the $\K^n$, so it contains a lot of information about the topology of long knots.  However, we do not know that this map induces an isomorphism.  More precisely, we have

\begin{conj}
The map $I_{\K^n}$ is a quasi-isomorphism.
\end{conj}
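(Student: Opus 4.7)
The plan is to route the verification through the Goodwillie--Weiss embedding calculus tower for $\K^n$, using the formality of the little $n$-discs operad as the bridge. For $n \geq 4$, the Taylor tower $T_\infty \K^n$ converges to $\K^n$, and the Bousfield--Kan type spectral sequence associated with the cosimplicial model of this tower has been identified (by Arone--Turchin, Lambrechts--Turchin--Voli\'c, and Sinha) with a page whose underlying chain complex is a graph/diagram complex built from the cohomology of configuration spaces of points in $\R^n$. The first step is to exhibit an explicit quasi-isomorphism between $(\calD^n,\delta)$ and this diagram complex; the combinatorics of our vertices, edges, loops, and the degree convention of \refD{Degree} are designed to match exactly the Hochschild-type bicomplex governing the $E^2$ page.

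The second step is to invoke Kontsevich's formality theorem for the little $n$-discs operad, whose proof uses configuration space integrals of precisely the kind appearing in \refT{Italians}. Rational formality collapses the relevant spectral sequence and yields an isomorphism from the diagram complex to $H^*(\K^n;\R)$. The third and decisive step is to check that this formality-induced isomorphism agrees with $H^*(I_{\K^n})$. Both maps are built from the same Gauss-map / propagator data on compactified configuration spaces, so one expects them to coincide up to sign conventions, but making this precise requires a chain-level comparison at the level of the pushforward $\pi_\ast$ along the Fulton--MacPherson strata, combined with a Fubini-type argument that splits the integral over $\Conf[p,q;\K^n,\R^n]$ into integrals over fibers of the evaluation map appearing in the Taylor tower.

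The main obstacle is this last compatibility check. The configuration space integrals defining $I_{\K^n}$ are genuinely global and geometric, pushing forward over a fiber bundle with corners, while the formality map is assembled stage by stage from Kontsevich's graph cocycles on $\Conf[p,\R^n]$. Reconciling them demands careful bookkeeping of: (a) the faces at infinity highlighted in \refL*{R:FacesAtInfinity}, which correspond to the truncation filtration of the Taylor tower; (b) the disconnected and anomalous hidden strata, whose vanishing in \refT{Italians} must match the vanishing of the analogous terms in the formality proof; and (c) signs arising from the orderings of vertices, orientations of edges, and parity of $n$ as encoded in \refD{DiagramSpace}.

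A secondary obstacle is that one must argue the quasi-isomorphism survives passage from the Taylor tower to $\K^n$ itself, which for $n \geq 4$ follows from the Goodwillie--Klein--Weiss convergence theorem but is less automatic at the chain level than at the homotopy level. For $n = 3$, the appearance of the anomalous face and the correction term $\mu_\Gamma A(K)$ in \refT{AllFiniteType} suggests the conjecture should be modified, explaining why it is stated only for $n > 3$. If these compatibilities can be established, the conjecture would follow by a standard zig-zag, yielding $I_{\K^n}$ as an explicit de Rham model for $\K^n$ over $\R$.
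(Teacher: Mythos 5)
This statement is a \emph{conjecture}: the paper explicitly says ``we do not know that this map induces an isomorphism,'' and offers no proof. Your proposal is a reasonable research outline, but it is not a proof, and the gap is exactly the step you yourself flag as ``the main obstacle'' and then do not carry out. The known results you cite --- convergence of the Taylor tower for $n\geq 4$, the identification of the $E^1/E^2$ page with a diagram complex, and the collapse forced by formality --- together show only that $H^*(\calD^n)$ and $H^*(\K^n)$ are \emph{abstractly} isomorphic (and even that requires first proving your step one, the quasi-isomorphism between $(\calD^n,\delta)$ and the spectral-sequence complex, which you assert rather than establish; note that the complex $\widetilde{\calD^n}$ appearing in the formality theorem differs from $\calD^n$, e.g.~loops are not allowed in $\widetilde{\calD^n}$, so this identification is not automatic). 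An abstract isomorphism of cohomologies says nothing about whether the \emph{specific} chain map $I_{\K^n}$, defined by pushforward over $\Conf[p,q;\K^n,\R^n]$, induces it. That compatibility --- your step three --- is precisely the open content of the conjecture, and declaring that ``one expects them to coincide up to sign conventions'' is not an argument. The formality map integrates over fibers of $\Conf[p+q,\R^n]\to\Conf[p,\R^n]$, while $I_{\K^n}$ integrates over a pullback bundle over $\K^n$ built from the evaluation map; relating the two requires a genuine chain-level comparison across the entire Taylor tower (including how the integrals interact with the ``doubling'' and restriction maps of the cosimplicial model), and no such comparison is known.

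A secondary issue: even if all three steps were carried out, a spectral-sequence collapse identifies associated graded objects, and upgrading that to the statement that a particular map of complexes is a quasi-isomorphism requires additional filtration-compatibility arguments that your sketch does not address. In short, nothing in your proposal is wrong as a description of a plausible strategy, but every load-bearing step is either asserted or deferred, so the conjecture remains exactly as open after your argument as before it.
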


Even though we do not have \refT{Italians} for $n=3$, the construction is compatible with what we already did in the case of classical knots $\K^3$.  Namely, for $n=3$, one does not get a cochain map in all degrees because of the anomalous face. But in degree zero, it turns out that
$$
\Ho^0(\mathcal{D}^3)=\mathcal{TD}
$$
(up to certain diagram automorphism factors; see \cite[Section 3.4]{KMV:FTHoLinks}).
In other words, the kernel of $\delta$ in degree zero is defined by imposing the 1T, STU, and IHX relations.
 \bigskip
 
Thus, after correcting by the anomalous correction and after identifying $\mathcal{TD}$ with its dual, the space of weight systems $\mathcal{TW}$ (the dualization gymnastics is described in \cite[Section 3.4]{KMV:FTHoLinks}), we get a map
$$
(\Ho^0(\mathcal{D}^3))^*=\mathcal{TW}\longrightarrow \Ho^0(\K).
$$
But this is precisely the map $I_{\K^3}^0$ from \refT{AllFiniteType} and we already know that the image of this map is the finite type knot invariants.


\section{Further generalizations and applications}\label{S:Generalizations}


 In this section we give brief overviews of other contexts in which  configuration space integrals have appeared in recent years.


\subsection{Spaces of links}\label{S:Links}


Configuration space integrals can also be defined for spaces of long links, homotopy links, and braids.  (The reader should keep in mind that it is actually quite surprising that they can be defined for homotopy links.)  The results stated here encompass those for knots (by setting $m=1$ in $\Lk_m^n$).
\bigskip

For $m\geq 1$, $n\geq 2$, let $\Map_c(\sqcup_m\R, \R^n)$ be the space of smooth maps of $\sqcup_m\R$ to $\R^n$ which, outside of $\sqcup_m I$ agree with the map $\sqcup_m\R\to \R^n$, which is on the $i$th copy of $\R$ given as
$$
t\longmapsto (t,i,0,0,...,0).
$$
As in the case of knots (\refD{KnotsAndLinks_}), we can define the spaces of links as subspaces of $\Map_c(\sqcup_m\R, \R^n)$ with the induced topology as follows.  

\noindent\begin{defi1}\label{D:KnotsAndLinks}
Define the space of 
\begin{itemize} 
\item \emph{long (or string) links with $m$ strands} $\Lk_m^n\subset \Map_c(\sqcup_m\R, \R^n)$ as the space of embeddings $L\colon \sqcup_m\R\to \R^n$.
\item \emph{pure braids on $m$ strands} $\Br_m^n\subset \Map_c(\sqcup_m\R, \R^n)$ as the space of embeddings $B\colon \sqcup_m\R\to \R^n$ whose derivative in the direction of the first coordinate is positive.
\item \emph{long (or string) homotopy links with $m$ strands} $\HLk_m^n\subset \Map_c(\sqcup_m\R,$ $ \R^n)$ as the space of link maps  $H\colon \sqcup_m\R\to \R^n$ (smooth maps of $\sqcup_m\R$ in $\R^n$ with the images of the copies of $\R$ disjoint).
\end{itemize}
\end{defi1}

\begin{rem1}
Another (and in fact, more standard) way to think about $\Br_m^n$ is as the loop space  $\Omega\Conf(m,\R^{n-1})$. 
\end{rem1}

\begin{rem1}
For technical reasons, it is sometimes better to take strands that are not parallel outside of $I^n$, but this does not change anything about the theorems described here.  For details, see \cite[Definition 2.1]{KMV:FTHoLinks}.
\end{rem1}

Some observations about these spaces are:

\begin{itemize}
\item $\Br_m^n\subset \Lk_m^n \subset \HLk_m^n$;  
\item In $\pi_0(\HLk_m^n)$, we can pass a strand through itself so this can be thought of as space of ``links without knotting";
\end{itemize}

Example of a homotopy link and a braid is given in Figure \ref{F:LinkExamples}.  Note that, as usual, we have confused the maps $H$ and $L$ with their images in $\R^n$.  
\bigskip

 \begin{figure}[!htbp]
\begin{center}
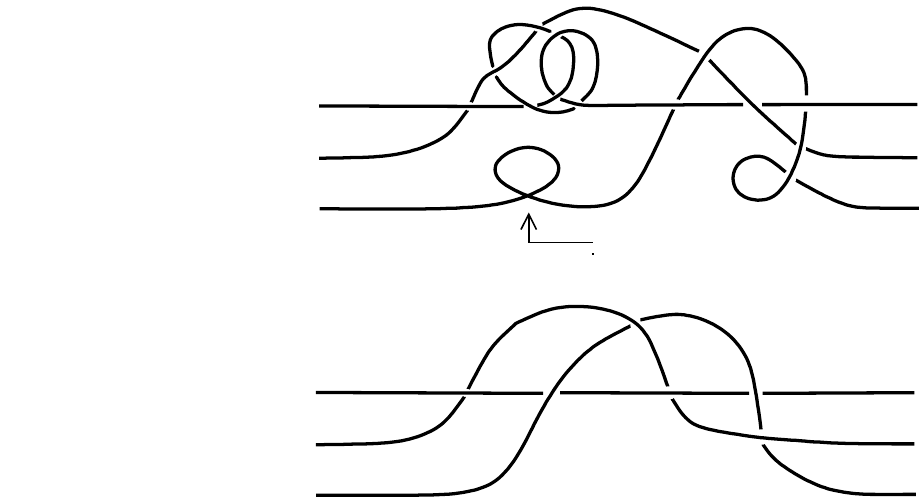
\caption{Examples of links. The top picture is a homotopy link, but not a link (and hence not a braid) because of the self-intersection in the bottom strand.}
\label{F:LinkExamples}
\end{center}
\end{figure}

When we say ``link", we will mean an embedded link.  Otherwise we will say ``homotopy link" or ``braid".  As with knots, the adjective ``long" will be dropped.  We will denote components (i.e.~strands) of an embedded link by $L=(K_1,K_2, ..., K_m)$.
 \bigskip
 
 As before, an \emph{isotopy} is a  homotopy in the space of links or braids, and \emph{link homotopy} is a path in the space of homotopy links.
\bigskip

As in the case of the space of knots, all of these link spaces are smooth infinite-dimensional paracompact manifolds so we can consider their deRham cohomology. 
\bigskip

Finite type invariants of these link spaces can be defined the same way as for knots (see \refS{FiniteType}).  Namely, we consider self-intersections which, in the case of links, come from a single strand or two different strands (i.e.~in the left picture of Figure \ref{F:SkeinRelation}, there are no conditions on the two strands making up the singularity).  For homotopy links, we only take intersections that come from different strands.  For braids, this condition is automatic since a braid cannot ``turn back" to intersect itself. Then a finite type $k$ invariant is defined as an invariant that vanishes on $(k+1)$ self-intersections.  We will denote finite type $k$ invariants of links, homotopy links, and braids by $\mathcal{LV}_k$, $\mathcal{HV}_k$, and $\mathcal{BV}_k$, respectively.
\bigskip

As for knots, the question of separation of links by finite type invariants is still open, but it is known that these invariants separate   homotopy links \cite{HabLin-Classif} and braids \cite{BN:HoLink, Kohno:LoopsFiniteType}.
\bigskip

We now revisit \refS{HigherKnots} and show how \refT{Italians} generalizes to links. Namely, recall the cochain map
$$
I_{\K^n}\colon \mathcal{D}^n \longrightarrow \Omega^*(\K^n).
$$
The first order of business is to generalize the diagram complex $\mathcal{D}^n$ to a complex $\mathcal{LD}_m^n$ (which we will use for both links and braids) and a subcomplex $\mathcal{HD}_m^n$ (which we will use for homotopy links).
This generalization is simple:  $\mathcal{LD}_m^n$ is defined the same way as $\mathcal{D}^n$ except there are now $m$ segments, as for example in Figure \ref{F:DiagramExample}.

 \begin{figure}[!htbp]
\begin{center}
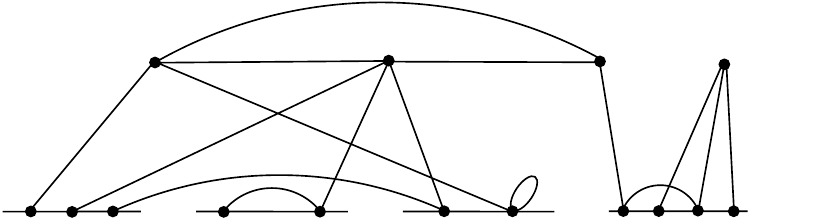
\caption{An example of a diagram for links (without decorations).}
\label{F:DiagramExample}
\end{center}
\end{figure}

All the definitions from \refS{HigherKnots} carry over in exactly the same way and we will not reproduce the details here, especially since they are spelled out in \cite[Section 3]{KMV:FTHoLinks}.  In particular, depending on the parity of $n$, the diagrams have to be appropriately decorated. The differential is again given by contracting arcs and edges.
\bigskip

$\mathcal{HD}_m^n$ is defined by taking diagrams 
\begin{itemize}
\item with no loops, and 
\item requiring that, if there exists a path between distinct vertices on a given segment, then it must pass through a vertex on another segment.  
\end{itemize}
It is a simple combinatorial exercise to show that $\mathcal{HD}_m^n$ is a subcomplex of $\mathcal{LD}_m^n$ \cite[Proposition 3.24]{KMV:FTHoLinks}.
\bigskip

As expected, in degree zero, complexes $\Lk_m^n$ and $\HLk_m^n$ are still defined by imposing the STU and IHX relations, and an extra relation in the case of $\HLk_m^n$ that diagrams cannot contain closed paths of edges.  In particular, the spaces of weight systems of degree $k$ for these link spaces, which we will denote by $\LW_k$ and $\HLW_k$, consist of functionals vanishing on these relations (with automorphism factors); see  \cite[Definition 3.35]{KMV:FTHoLinks} for details.
\bigskip

As it turns out, the integration is not as easily generalized.  The problem is that, if we want to produce cohomology classes on $\HLk_m^n$, then the evaluation map from \eqref{E:pqpullback} will need to take values in $\HLk_m^n$, but points in this space are not even immersions, let alone embeddings.  Hence the target of the evaluation map would not be a configuration space but rather some kind of a ``partial configuration space" where some points are allowed to pass through each other (this is actually a complement of a subspace arrangement, a familiar object from algebraic geometry).  But then the projection map would be a map of partial configuration spaces which is far from being a fibration (see \cite[Example 4.7]{KMV:FTHoLinks}).  This makes it unlikely that the pullback is a bundle over $\HLk_m^n$.
\bigskip

A way around this is to patch the integral together from pieces for which this difficulty does not occur.  This is achieved by breaking up a diagram $\Gamma\in \LD_m^n$ into its \emph{graft components} which are essentially the components one would see after the segments and segment vertices are removed.  The second condition defining the subcomplex $\HLD_m^n$ guarantees that there will be no more than one segment vertex on each segment of each graft, and this turns out to remove the issue of the projection not being a fibration.  Since it would take us too far afield to define the graft-based pullback bundle precisely, we will refer the reader to \cite[Definition 4.16]{KMV:FTHoLinks} for details.  Suffice it to say here that the construction essentially takes into account both the vertices and edges of $\Gamma$ rather than just vertices when constructing the pullback bundle (see Remark \ref{R:DisconnectedStratum}). This procedure is indeed a  refinement of the original definition of configuration space integrals since it produces the same form on $\Lk_m^n$ as the original definition \cite[Proposition 4.24]{KMV:FTHoLinks}.  The only difference, therefore, is that the graft definition makes it possible to restrict the integration from the complex $\LD_m^n$ to the subcomplex $\HLD_m^n$ and produce forms on $\HLk_m^n$.
\bigskip

We then have a generalization of \refT{Italians}.

\begin{thm1}[\cite{KMV:FTHoLinks}, Section 4.5]
There are integration maps $I_{\mathcal{L}}$ and $I_{\mathcal{H}}$ given by configuration space interals and a commutative diagram
$$
\xymatrix{
\mathcal{HD}_m^n \ar[r]^-{I_{\HLk}} \ar@{^{(}->}[d]  & \Omega^*(\HLk_m^n) \ar[d] \\
\mathcal{LD}_m^n \ar[r]^-{I_{\Lk}} &  \Omega^*(\Lk_m^n)
}
$$
Here $I_{\mathcal{L}}$ is a cochain map for $n>3$ and $I_{\mathcal{H}}$ is a cochain map for $n\geq 3$.
\end{thm1}

\begin{proof}
The proof goes exactly the same way as in the case of knots.  The only difference is that $I_{\mathcal{H}}$ is also a cochain map for $n=3$.  The reason for this is that the anomalous face is not an issue for homotopy links.  Namely, the anomaly can only arise when all points on and off the link collide.  But since strands of the link are always disjoint, this is only possible when all the configuration points on the link are in fact on a single strand.  In other words, the corresponding diagram $\Gamma$ is concentrated on a single strand.  Such a diagram does not occur in $\mathcal{HD}_m^n$.  (The integral in this case in effect produces a form on the space of knots, so that the anomaly can be thought of as a purely knotting, rather than linking, phenomenon.)
\end{proof}

\begin{rem1} As in the case of knots, there is an algebra structure on $\mathcal{LD}_m^n$ given by the shuffle product that is compatible with the wedge product of forms \cite[Section 3.3.2]{KMV:FTHoLinks}.  It thus turns out that the maps $I_{\Lk}$ and $I_{\HLk}$ are maps of algebras \cite[Proposition 4.29]{KMV:FTHoLinks}.
 \end{rem1}

For $n=3$, we also have a generalization of \refT{AllFiniteType}.

\begin{thm1}[\cite{KMV:FTHoLinks}, Theorems 5.6 and 5.8]\label{T:AllFiniteTypeLinks}
Configuration space integral maps $I_{\mathcal{L}}$ and $I_{\mathcal{H}}$ induce isomorphisms
\begin{align*}
I_{\mathcal{L}}^0\colon \LW_k  & \stackrel{\cong}{\longrightarrow} \mathcal{LV}_k/\mathcal{LV}_{k-1}\subset \Ho^0(\mathcal{L}_m^3) \\
I_{\mathcal{H}}^0\colon \HLW_k  & \stackrel{\cong}{\longrightarrow} \mathcal{LV}_k/\mathcal{LV}_{k-1}\subset \Ho^0(\mathcal{H}_m^3) 
\end{align*}
\end{thm1}

The isomorphisms are given exactly as in \refT{AllFiniteType}. In particular, the anomalous correction has to be introduced for the case of links.
\bigskip

Lastly, we mention an interesting connection to a class of classical homotopy link invariants called \emph{Milnor invariants} \cite{Milnor-Mu}.  In brief, these invariants live in the lower central series of the link group and essentially  measure how far a ``longitude" of the link survives in the lower central series. It is known that Milnor invariants of long homotopy links are finite type invariants (and it is important that these are long, rather than closed homotopy links) \cite{BN:HoLink, Lin:FTHoLink}.   \refT{AllFiniteTypeLinks} thus immediately gives us

\begin{cor1}
The map $I_{\mathcal{H}}$ provides configuration space integral expressions for Milnor invariants of $\HLk_m^3$.
\end{cor1}

For more details about this corollary, see \cite[Section 5.4]{KMV:FTHoLinks}.
\bigskip
 
Even though we made no explicit mention of braids in the above theorems, everything goes through the same way for this space as well.  The complex is still $\LD_m^n$ but the evaluation now take place on braids, i.e.~elements of  $\Br_m^n$.  The integration $I_{\Br}$ would thus produce forms on $\Br_m^n$ and all finite type invariants of $\Br_m^3$.  However, this is not very satisfying since we do not yet have a good subcomplex of $\LD_m^n$ or a modification in the integration procedure that would take into account the definition of braids.  For example, since $\Br_m^n\simeq \Omega\Conf(m, \R^{n-1})$, braids can be thought of as ``flowing" at the same rate, and the integration hence might be defined so that it only takes place in ``vertical slices" of the braid.   In particular, one should be able to connect configuration space integrals for braids to Kohno's work on braids and Chen integrals \cite{Kohno:LoopsFiniteType}.


\subsection{Manifold calculus of functors and finite type invariants}\label{S:Calculus}


Configuration space integrals connect in unexpected ways to the theory of \emph{manifold calculus of functors} \cite{W:EI1, GW:EI2}.  Before we can state the results, we provide some basic background, but we will assume the reader is familiar with the language of categories and functors.  For further details on  manifold calculus of functors, the reader might find \cite{M:MfldCalc} useful.
\bigskip

For $M$ a smooth manifold, 
Let $\Top$ be the category of topological spaces and let 
$$
\mathcal{O}(M)=\text{category of open subsets of $M$ with inclusions as morphisms}.
$$
Manifold calculus studies functors
$$
F\colon \mathcal{O}(M)^{op} \longrightarrow \Top
$$
satisfying the conditions:
\begin{enumerate}
\item $F$ takes isotopy equivalences to homotopy equivalences, and;
\item For any sequence of open sets $U_0\subset U_1\subset\cdots$, the canonical map $F(\cup_i U_i)\to\holim_i F(U_i)$ is a homotopy equivalence (here ``$\holim$" stands for the homotopy limit).
\end{enumerate}
The target category is not limited to topological spaces, but for concreteness and for our purposes we will stick to that case.
\bigskip

One such functor is the space of embeddings $\Emb(-,N)$, where $N$ is a smooth manifold, since, given an inclusion
 $$
O_1\hookrightarrow O_2$$ of open subsets of $M$, there is a restriction 
$$
\Emb(O_2,N)\longrightarrow \Emb(O_1,N).  
$$
In particular, we can specialize to the space of knots $\K^n$, $n\geq 3$, and see what manifold calculus has to say about it.
\bigskip

For any functor $F\colon \mathcal{O}(M)^{op} \to \Top$, the theory produces a ``Taylor tower" of approximating functors/fibrations
$$
F(-)\longrightarrow\big(
 T_{0}F(-)\leftarrow \cdots\leftarrow T_{k}F(-) \leftarrow \cdots \leftarrow T_{\infty}F(-)
\big)
$$
where $T_{\infty}F(-)$ is the inverse limit of the tower.

\begin{thm1}[\cite{W:HomEmb}]\label{T:HomologyConvergence}
For $F=\Emb(-,N)$ and for $2\dim(M)+2\leq \dim(N)$, the Taylor tower converges on (co)homology (for any coefficients), i.e.
$$\Ho_*(\Emb(-,N))\cong\Ho_*(T_{\infty}\Emb(-,N)).$$ 
In particular, evaluating at $M$ gives 
$$\Ho_*(\Emb(M,N))\cong\Ho_*(T_{\infty}\Emb(M,N)).$$ 
\end{thm1}
 
 \begin{rem1}
 For $dim(M)+3\leq dim(N)$, the same convergence result is true on homotopy groups \cite{GK}.
 \end{rem1}

Note that when $M$ is 1-di\-men\-sio\-nal, $N$ has to be at least 4-di\-men\-sio\-nal to guarantee convergence. Hence this says nothing about $\K^3$.  Nevertheless, the tower can still be constructed in this case and it turns out to contain a lot of information.      
\bigskip

To construct $T_k\K^n$, $n\geq 3$, let $I_1, ..., I_{k+1}$ be disjoint closed subintervals of $\R$ and let 
$$
\emptyset\neq S\subseteq\{1,..., k+1\}.
$$ 
Then let
$$
\K^n_S=\Emb_c(\R\setminus \bigcup_{i\in S}I_i, \ \R^n),
$$
where $\Emb_c$ as usual stands for the space of ``compactly supported" embeddings, namely those that are fixed outside some compact set such as $I$.
\bigskip

Thus $\K^n_S$ is a space of ``punctured knots"; an example is given in Figure \ref{F:PuncturedKnot}.

\begin{figure}[height=5mm]
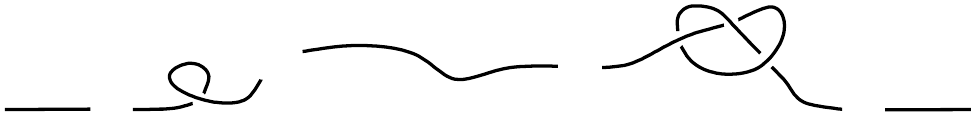
\caption{An element of $\K_{\{1,2,3,4\}}^n$.}
\label{F:PuncturedKnot}
\end{figure}

These spaces are not very interesting on their own, and are connected even for $n=3$. However, we have restriction maps $\K^n_S\to\K^n_{S\cup\{i\}}$ given by ``punching another hole", namely restricting an embedding of $\R$ with some intervals taken out to an embedding of $\R$ with one more interval taken out.  These spaces and maps then form a diagram of knots with holes (such a diagram is sometimes called a  \emph{punctured cube}).  

\begin{exam1}\label{Ex:KnotsCubeExample}
When $k=2$, we get
$$
\xymatrix@=10pt{
         &           &   \K^n_{\{1\}} \ar'[d][dd]           \ar[dr]  &                  \\
        &  \K^n_{\{2\}} \ar[rr] \ar[dd]  &             & \K^n_{\{1,2\}}
         \ar[dd] \\
\K^n_{\{3\}} \ar'[r][rr] \ar[dr] &        &   \K^n_{\{1,3\}}
\ar[dr] &                   \\
        &   \K^n_{\{2,3\}} \ar[rr]      &                    &
\K^n_{\{1,2,3\}}
}
$$ 
\end{exam1}

\begin{defi1}  The $k$th stage of the Taylor tower for $\K^n$, $n\geq 3$, is the homotopy limit of this punctured cube, i.e.
$$
T_k\K^n=\underset{\emptyset\neq S\subseteq\{1, .., k+1\}}{\holim} \K^n_S.
$$
\end{defi1}

For the reader not familiar with homotopy limits, it is actually not hard to see what this homotopy limit is:  For example, the punctured cubical diagram from Example \ref{Ex:KnotsCubeExample} can be redrawn as
$$
\xymatrix@=10pt{
                                     &                                      &     \K^n_{\{1\}}  \ar[dl] \ar[dr]  &                                &                                 \\
                                     &      \K^n_{\{1,3\}} \ar[r]           &    \K^n_{\{1,2,3\}}                  &   \K^n_{\{1,2\}} \ar[l]         &                                 \\
  \K^n_{\{3\}} \ar[ur] \ar[rr]    &                                       &     \K^n_{\{2,3\}} \ar[u]         &                                &    \K^n_{\{2\}}\ar[ll]\ar[ul]  
  }
$$

Then a point in $T_2\K^n$ is 
\begin{itemize}
\item A point in each $\K^n_{\{i\}}$ (once-punctured knot);
\item A path in each $\K^n_{\{i,j\}}$ (isotopy of a twice-punctured knot) ;
\item A two-parameter path in $\K^n_{\{1,2,3\}}$ (two-parameter isotopy of a thrice-punctured knot); and
\item Everything is compatible with the restriction maps.  Namely, a path in each $\K^n_{\{i,j\}}$ joins the restrictions of the elements of $\K^n_{\{i\}}$ and $\K^n_{\{j\}}$ to $\K^n_{\{i,j\}}$, and the two-parameter path in $\K^n_{\{1,2,3\}}$ is really a map of a $2$-simplex into $\K^n_{\{1,2,3\}}$ which, on its edges, is the restriction of the paths in $\K^n_{\{i,j\}}$ to $\K^n_{\{1,2,3\}}$.
\end{itemize}
The pattern for $T_k\K^n$, $k\neq 2$, should be clear.
\bigskip

There is a map
$$
\K^n\longrightarrow T_k\K^n
$$
given by punching holes in the knot (the isotopies in the homotopy limit are thus constant).

\begin{rem1}  It is not hard to see that, for $k\geq 3$, $\K^n$ is the actual pullback (limit) of the subcubical diagram.  
So the strategy here is to replace the limit, which is what we really care about, by the homotopy limit, which is hopefully easier to understand.
\end{rem1}

There is also a map, for all $k\geq 1$,
$$
T_k\K^n\longrightarrow T_{k-1}\K^n,
$$
since the diagram defining $T_{k-1}\K^n$ is a subdiagram of the one defining $T_k\K^n$ (this map is a fibration; this is a general fact about homotopy limits).
\bigskip

Putting these maps and spaces together, we get the Taylor tower for $\K^n$, $n\geq 3$:
$$
\K^n\longrightarrow\big(
 T_{0}\K^n\leftarrow \cdots\leftarrow T_{k}\K^n \leftarrow \cdots \leftarrow T_{\infty}\K^n
\big).
$$
By \refT{HomologyConvergence}, this tower converges on (co)homology for $n\geq 4$.
\bigskip

There is a variant of this Taylor tower, the so-called ``algebraic Taylor tower", which is a tower of cochain complexs obtained by applying cochains to each space of punctured knots and then letting $T_k^*(\K^3)$ be the homotopy colimit of the resulting diagram of cochain complexes.
\bigskip

Recall the map $I_{\K^3}^0$ from \refT{AllFiniteType}.  We then have the following theorem, which essentially says that the algebraic Taylor tower classifies finite type invariants. 
\begin{thm1}[\cite{V:FTK}, Theorem 1.2]\label{T:MainThesisTheorem}  The map $I_{\K^3}^0$ factors through the algebraic Taylor tower for $\K^3$.  Furthermore, we have isomorphisms
$$
\xymatrix{
\mathcal{TW}_k\ar[rr]^{I_{\K^3}^0}_{\cong}\ar[dr]^{\cong}  &  &  \calV_k/\calV_{k-1} \\
&  \Ho^0(T_{2k}^*\K^3) \ar[ur]^{\cong} & 
}
$$
(and $\Ho^0(T_{2k}^*\K^3)\cong\Ho^0(T_{2k+1}^*\K^3)$ so all stages are accounted for).
\end{thm1}

The main ingredient in this proof is the extension of configuration space integrals to the stages $T_{2k}\K^3$ of the space of long knots \cite[Theorem 4.5]{V:IT}.
The idea of this extension is this:  As a configuration point moves around a punctured knot (this corresponds to a point moving on the knot in the usual construction) and approaches a hole, it is made to ``jump", via a path in the homotopy limit (this is achieved by an appropriate partition of unity), to another punctured knot which has that hole filled in, thus preventing the evaluation map from being undefined.
\bigskip

\refT{MainThesisTheorem} places finite type theory into a more homotopy-theo\-re\-tic setting and the hope is that this might give a new strategy for proving the separation conjecture.
\bigskip

Several generalizations of \refT{MainThesisTheorem} should be possible.  For example, it should also be possible to extend the entire chain map from \refT{Italians} to the Taylor tower. It should also be possible to show that the Taylor \emph{multitowers} for $\Lk_m^n$, $\HLk_m^n$, and $\Br_m^n$ supplied by the \emph{multivariable manifold calculus of functors} \cite{MV:Multi} also admit factorizations of the integration maps $I_{\Lk}$, $I_{\HLk}$, and $I_{\Br}$, as well as classify finite type invariants of these spaces.  
\bigskip

Lastly, it seems likely that finite type invariants are all one finds in the ordinary Taylor tower (and not just its algebraic version).  Some progress toward this goal can be found in \cite{BCSS}.


\subsection{Formality of the little balls operad}\label{S:Formality}


There is another striking connection between the Taylor tower for knots and configuration space integrals of a slightly different flavor.  To explain, we will first modify the space of knots slightly and instead of $\K^n$, $n>3$, use the space
$$
\overline{\K^n}=\hofiber (\Emb_c(\R, \R^n)\hookrightarrow \Imm_c(\R, \R^n)),
$$
where $\hofiber$ stands for the homotopy fiber.
This is the space of ``embeddings modulo immersions" and a point in it is a long knot along with a path, i.e.~a regular homotopy, to the long unknot (since this is a natural basepoint in the space of immersions) through compactly supported immersions.  This space is easier to work with, but it is not very different from $\K^n$:  the above inclusion is nullhomotopic \cite[Proposition 5.17]{S:OKS} so that we have a homotopy equivalence
$$
\overline{\K^n}\simeq \K^n\times \Omega^2 S^{n-1}.
$$
The difference between long knots and its version modulo immersions is thus well-undersrtood, especially rationally.
\bigskip

Now let $\mathcal{B}_n=\{\mathcal B_n(p)\}_{p\geq 0}$ be the little $n$-discs operad (i.e.~little balls in $\R^n$), where $\mathcal B_n(p)$ is the space of configurations of $p$ closed $n$-discs with disjoint interiors contained in the unit disk of $\R^n$.  The little $n$-discs operad is an important object in homotopy theory, and a good introduction  for the reader who is not familiar with it, or with operads in general, is \cite{MMS:Operads}.  Taking the chains and the homology of $\mathcal{B}_n$ gives two operads of chain complexes, $\operatorname{C}_*(\mathcal{B}_n; \R)$ and $\operatorname{H}_*(\mathcal{B}_n; \R)$ (where the latter is a chain complex with zero differential).  Then we have the following \emph{formality} theorem.

\begin{thm1}[Kontsevich \cite{K:OMDQ}; Tamarkin for $n=2$ \cite{Tamarkin:Formality}]\label{T:Formality}
For $n\geq 2$, there exists a chain of weak equivalences of operads of chain complexes
$$
\operatorname{C}_*(\mathcal{B}_n; \R)\stackrel{\simeq}{\longleftarrow}\widetilde{\calD^n}
\stackrel{\simeq}{\longrightarrow} \operatorname{H}_*(\mathcal{B}_n; \R),
$$
where $\widetilde{\calD^n}$ is a certain diagram complex.
In other words, $\mathcal{B}_n$ is (stably) formal over $\R$.
\end{thm1}

For details about the proof of \refT{Formality}, the reader should consult \cite{LV}.  The reason this theorem is relevant here is that $\widetilde{\calD^n}$ is a diagram complex (it is in fact a commutative differential graded algebra cooperad) that is essentially the complex $\calD^n$ we encountered in \refS{HigherKnots}.  The main difference is that loops are not allowed in $\widetilde{\calD^n}$.  In addition, the map 
$$
\widetilde{\calD^n}\longrightarrow \operatorname{C}_*(\mathcal{B}_n; \R)
$$
is given by configuration space integrals and is the same as the map $I_{\K^n}$ we saw before, with one important difference that the bundle we integrate over is different.  To explain briefly, $\mathcal{B}_n$ can be regarded as a collection of configuration spaces $\Conf[p,\R^n]$.  For a diagram $\Gamma\in\widetilde{\calD^n}$ with $p$ segment vertices and $q$ free vertices, consider the projection
$$
\pi\colon \Conf[p+q, \R^n]\longrightarrow \Conf[p, \R^n].
$$
This is a bundle in a suitable sense; see \cite[Section 5.9]{LV}.  The edges of $\Gamma$ again determine some Gauss maps $\Conf[p+q, \R^n]\to S^{n-1}$, so that the product of volume forms can be pulled back to $\Conf[p+q, \R^n]$ and then pushed forward to $\Conf[p, \R^n]$.  This part is completely analogous to what we have seen in \refS{HigherKnots}.  The bulk of the proof of \refT{Formality} then consist of showing that the map $\widetilde{\calD^n}\to\Omega^*(\Conf[p,\R^n])$ is an equivalence (we are liberally passing between cooperad $\widetilde{\calD^n}$ and its dual, as well as chains and cochains on configuration spaces).  This again comes down to vanishing arguments, which are the same as in \refT{Italians}.  
\bigskip

The map 
$$
\widetilde{\calD^n}\longrightarrow \operatorname{H}_*(\mathcal{B}_n; \R)
$$
is easy, with some obvious diagrams sent to the generators of the (co)homology of configuration spaces and the rest to zero.
\bigskip

\refT{Formality} has been used in a variety of situations, such as McClure-Smith's proof of the Deligne Conjecture and Tamarkin's proof of Kontsevich's deformation quantization theorem.  For us, the importance is in that it gives information about the rational homology of $\overline{\K^n}$, $n>3$.  
\bigskip

To explain, first observe that the construction of the Taylor tower for $\K^n$ from \refS{Calculus} can be carried out in exactly the same way for $\overline{\K^n}$.  Then, by retracting the arcs of a punctured knot, we get
$$
\K^n_S\simeq\Conf(|S|-1, \R^n).
$$
If we had used $\K^n$, we would also have copies of spheres keeping track of tangential data.  In the $\overline{\K^n}$ version, they are not present since the space of immersions, which carries this data, has been removed. 
\bigskip

The restriction maps ``add a point", as in Figure \ref{F:PuncturedKnotRestrict}.

\begin{figure}[!htbp]
\begin{center}
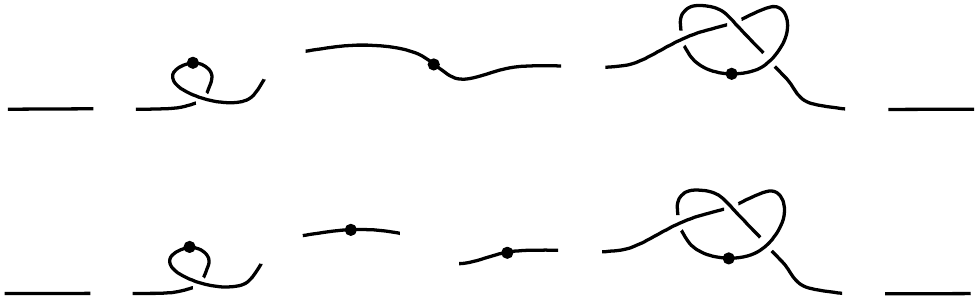
\caption{Restriction of punctured knots.}
\label{F:PuncturedKnotRestrict}
\end{center}
\end{figure} 

This structure yields a homology spectral sequence that can be associated to the Taylor tower for $\overline{\K^n}$, $n\geq 3$.  It starts with
$$
E^{1}_{-p,q}=\Ho_q(\Conf(p,\R^n)),
$$
and, for $n\geq 4$, converges to 
$
\Ho_*(T_{\infty}\overline{\K^n})$.  By \refT{HomologyConvergence}, this spectral sequence hence converges to $\Ho_*(\overline{\K^n})$.

\begin{rem1}
This is the \emph{Bousfield-Kan} spectral sequence that can be associated to any \emph{cosimplicial space}, and in particular to the cosimplicial space defined by Sinha \cite{S:TSK} that models the Taylor tower for $\overline{\K^n}$.  The spaces in this cosimplicial model are slight modifications of the Fulton-MacPherson compactification of $\Conf(p,\R^n)$ and the maps ``double" and ``forget" points. In particular, the doubling maps are motivated by the situation from Figure \ref{F:PuncturedKnotRestrict}. In addition, this turns out to be the same spectral sequence (up to regrading) as the one defined by Vassiliev \cite{Vass:Cohom} which motivated  the original definition of finite type knot invariants.
\end{rem1}
 
\begin{thm1}
The homology spectral sequence described above collapses rationally at the $E^2$ page for $n\geq 3$.
\end{thm1}

This theorem was proved for $n\geq 4$ in \cite{LTV:Vass}, for $n= 3$ on the diagonal in \cite{K:Fey}, and for $n=3$ everywhere in \cite{Moriya:SSCollapse} and \cite{PAST:SSCollapse}.  \refT{MainThesisTheorem} can also be interpreted as the collapse of this spectral sequence on the diagonal for $n=3$.
 
 \begin{proof}[Idea of proof]
The vertical differential in the spectral sequence is the ordinary internal differential on the cochain complexes of configuration spaces (the vertical one has to do with doubling configuration points, and this has to do with Figure \ref{F:PuncturedKnotRestrict}).  By \refT{Formality}, this differential can be replaced by the zero differential, and hence the spectral sequence collapses.  Some more sophisticated model category theory techniques are required for the case $n=3$.
 \end{proof}

\begin{rem1}
Collapse is also true for the \emph{homotopy} spectral sequence for $n\geq 4$ \cite{ALTV}.
\end{rem1}

So for $n\geq 4$, the homology of the $E^2$ page is the homology of $\overline{\K^n}$.
A more precise way to say this is
\begin{thm1}
For $n\geq 4$, $\Ho_*(\overline{\K^n}; \Q)$ is the Hochschild homology of the Poisson operad of degree $n-1$, which is the  operad obtained by taking the homology of the little $n$-cubes operad.
\end{thm1}

For more details on the Poisson operad of degree $n-1$, see \cite[Section 1]{T:Homology}.  Briefly, this is the operad encoding Poisson algebras, i.e.~graded commutative algebras with Lie bracket of degree $n-1$ which is a graded derivation with respect to the multiplication.  One can define a differential (the Gerstenhaber bracket), and the homology of the resulting complex is the Hochschild homology of the Poisson operad.
\bigskip

In summary, $\Ho_*(\overline{\K^n}; \Q)$ is built out of $\Ho_*(\Conf(p,\R^n); \Q)$, $p\geq 0$, which is understood.  In fact, this homology can be represented combinatorially with graph complexes of chord diagrams.  This therefore gives  a nice combinatorial description of $\Ho_*(\K^n; \Q)$, $n\geq 4$.  The case $n=3$ is not yet well understood, and the implications of the collapse are yet to be studied.  The main impediment is that, even though the spectral sequence collapses, it is not clear what the spectral sequence converges to.
\bigskip

It would be nice to rework the results described in this section for spaces of links.  For ordinary embedded links and braids, things should work the same, but homotopy links are more challenging since we do not yet have any sort of a convergence result for the Taylor (multi)tower for this space.

\bigskip
{\bf \centerline{Acknowledgements}}
The author would like to thank Sadok Kallel for the invitation to write this article and the referee for a careful reading.


\bigskip
\hfill\
{\footnotesize
\parbox{5cm}{Ismar Voli\'c\\
{\it Department of Mathematics},\\
Wellesley College,\\
106 Central Street,\\
Wellesley, MA 02481,\\
{\sf ivolic@wellesley.edu}}\
{\hfill}\
}

\end{document}